\numberwithin{equation}{section}
\newtheorem{theorem}{Theorem}[section]
\newtheorem{definition}[theorem]{Definition}
\newtheorem{lemma}[theorem]{Lemma}
\newtheorem{remark}[theorem]{Remark}
\newtheorem{proposition}[theorem]{Proposition}
\newtheorem{example}[theorem]{Example}
\newtheorem{conjecture}[theorem]{Conjecture}
\renewenvironment{proof}[1][\proofname. ]{ { \noindent \it #1}}{\qed \\}
\newcommand{\bu}{\bold{u}}
\newcommand{\be}{\bold{e}}
\newcommand{\PP}{\mathbb{P}}
\newcommand{\RR}{\mathbb{R}}
\newcommand{\CC}{\mathbb{C}}
\newcommand{\ZZ}{\mathbb{Z}}
\newcommand{\LL}{\mathbb{L}}
\newcommand{\CO}{\mathcal{CO}}
\newcommand{\CA}{\mathcal{A}}
\newcommand{\CB}{\mathcal{B}}
\newcommand{\CF}{\mathcal{F}}
\newcommand{\CG}{\mathcal{G}}
\newcommand{\CM}{\mathcal M}
\newcommand{\CN}{\mathcal N}
\newcommand{\LM}{\mathcal{LM}}
\newcommand{\ks}{\mathfrak{ks}}
\newcommand{\PO}{\mathfrak{PO}}
\newcommand{\OL}[1]{\overline{#1}}
\newcommand{\UL}[1]{\underline{#1}}
\newcommand{\AI}{A_\infty}
\newcommand{\calC}{\mathcal{C}}
\newcommand{\calD}{\mathcal{D}}
\newcommand{\locmir}{\mathcal{L}\mathcal{M}}
\newcommand{\fb}{\mathfrak{b}}
\newcommand{\fq}{\mathfrak{q}}
\newcommand{\mfai}{MF_{\AI}}
\newcommand{\id}{{\mathrm{id}}}
\newcommand{\Hom}{{\rm Hom}}
\newcommand{\OCO}{\otimes \cdots \otimes}
\begin{document}
\markboth{Sangwook Lee}
{Ring isomorphisms of closed state spaces via homological mirror symmetry}

%\catchline{}{}{}{}{}

\title{RING ISOMORPHISMS OF CLOSED STATE SPACES VIA HOMOLOGICAL MIRROR SYMMETRY}
\author{SANGWOOK LEE}
\address{Korea Institute for Advanced Study\\ 85 Hoegi-ro Dongdaemun-gu, Seoul 02455, Korea\\ swlee@kias.re.kr}
\maketitle

\begin{abstract}
We investigate how ring isomorphisms between mirror closed states of compact toric Fano manifolds come from homological mirror symmetry. A natural closed-open relation on $B$-side is also discussed.

\end{abstract}

%\keywords{Homological mirror symmetry; closed string mirror symmetry; Hochschild cohomology}

%\ccode{Mathematics Subject Classification 2000: 53D40, 14J32}

%\tableofcontents
\section{Introduction}
\subsection{Mirror symmetry}
Mirror symmetry, which is one of central concepts in string theory, has also enabled mathematicians to predict a number of striking coincidences of structures which come from different origins. It was first recognized as a powerful tool in mathematics when it successfully predicted curve countings in Calabi-Yau manifolds(\cite{CdGP}) in terms of period integrals. If we consider a Fano manifold $X$ instead of Calabi-Yau manifolds, then a version of (closed string) mirror symmetry conjecture translates into the following ring isomorphism:
\begin{equation}\label{closedms}
 QH^*(X) \cong Jac(W)
 \end{equation}
where the holomorphic function $W$ is the {\em Landau-Ginzburg mirror} of $X$. We will call these algebras as closed state spaces(quantum cohomologies are $A$-models, while Jacobian rings are $B$-models). Obviously the isomorphism is surprising: it compares a symplectic geometry data(counting of rational curves) on one hand and a deformation theoretic invariant of a holomorphic function on the other hand. There have been several works concerning this statement, for example \cite{Ba,Gi}.

Later Kontsevich introduced the mirror conjecture for open strings in mathematical language, so-called the {\em homological mirror symmetry}. For Calabi-Yau mirror pairs $(X,\check{X})$, it compares two $\AI$-categories, one is the Fukaya category($A$-model) $ Fu(X)$ and the other is $D^b_\infty Coh(\check{X})$, the (dg-enhancement of) derived category of coherent sheaves($B$-model) on $\check{X}$. For a Fano manifold, if the $A$-model is its Fukaya category, then the $B$-model is a category of matrix factorizations $MF(W)$ of the Landau-Ginzburg mirror $W$.

We focus on compact toric manifolds. In this case, Fukaya-Oh-Ohta-Ono\cite{FOOOtoric3} constructed the mirror isomorphism (\ref{closedms}) in terms of disc countings. They show that every ambient cycle can be used as a ``bulk parameter", so that the obstruction $m_0^b$ deforms to a new multiple of the unit $m_0^{\fb,b}$ if $b$ is a weak bounding cochain. Furthermore any derivative of $m_0^{\fb,b}$ with respect to a direction in ambient cycles is still a multiple of the unit, and the coefficients are used to define the isomorphism. 

For toric Fano manifolds, we have homological mirror symmetry. In \cite{CHL2} they construct a functor from the Fukaya category of a toric Fano manifold to the category of matrix factorizations. The functor sends a full subcategory $\mathcal{C}$ generated by strongly balanced torus fibers to the whole category, because image objects are easily seen to be generators. On the other hand, in \cite{EL} Evans-Lekili proved a generation result for monotone Hamiltonian $G$-manifolds: in particular we deduce from their result that the subcategory $\mathcal{C}$ generates the Fukaya category, so we obtain the homologica mirror symmetry for Fano case. A more general theorem of the generations of Fukaya categories of compact toric manifolds has been announced by \cite{AFOOO}.

\subsection{Closed-open relations}
It has been widely believed that the homological mirror symmetry ``induces" the closed string mirror symmetry. In particular, if we concentrate on the ring structure of mirror closed state spaces, then we expect that they are isomorphic to {\em Hochschild cohomology algebras} of mirror categories. If a holomorphic function $W$ has isolated singularities, then we indeed have a ring isomorphism
\[\gamma: HH^*(MF(W)) \cong Jac(W)\]
which can be seen as a ``closed-open map" on the Landau-Ginzburg $B$-model. We will construct $\gamma$ explicitly and justify why it deserves the name.

For Fukaya categories, it is in general hard to prove that their Hochschild cohomologies are isomorphic to quantum cohomologies. By \cite{EL} again, for monotone Hamiltonian $G$-manifolds we have isomorphisms between them, by a corollary of the generation result.

\subsection{Result of this paper}
We study how $\ks$ can be realized as a ring isomorphism from homological mirror symmetry. More precisely, we prove the following.
\begin{theorem}
Let $X$ be a compact toric Fano manifold and $\PO$ be its LG superpotential. Then there is a $Fu(X)$-$\OL{\mfai}(\PO)$-bimodule $\CM$ such that the following diagram commutes:
\[\xymatrix{
QH^*(X) \ar[r]^-{\mathcal{CO}} \ar[dd]^{\ks} & HH^*(Fu(X))\ar[rd]^\cong_{L_\CM^1} & \\
& & Hom†(\CM,\CM). \\
\bigoplus_{\eta \in Crit(\PO)} Jac(\PO;\eta) \ar[r]^-{\gamma} & HH^*(\OL{\mfai}(\PO)) \ar[ur]^\cong_{R_\CM^1} & }
\]
\end{theorem}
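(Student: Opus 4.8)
The plan is to derive the commutativity from a single geometric source: the moduli spaces of holomorphic discs with one interior marked point, which simultaneously feed the Fukaya $A_\infty$-structure (hence $\CO$), the bulk-deformed superpotential (hence $\ks$), and the defining operations of the localized mirror functor of \cite{CHL2} (hence the bimodule $\CM$). Accordingly, the first step is to fix $\CM$ as the graph bimodule of that functor $F\colon Fu(X)\to\OL{\mfai}(\PO)$, so that $\CM(L,N)=\Hom_{\OL{\mfai}(\PO)}(F(L),N)$ carries its tautological $Fu(X)$-$\OL{\mfai}(\PO)$-bimodule structure. By \cite{CHL2} the image objects generate, and by the corollary to \cite{EL} the subcategory $\calC$ of strongly balanced fibers generates $Fu(X)$; hence $F$ induces a derived equivalence and $\CM$ is invertible as a bimodule.

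Second, I would set up the purely algebraic scaffolding. For any $A_\infty$-bimodule $\CM$ over $(\CA,\CB)$ there are chain-level module actions of the Hochschild complexes $CC^*(\CA)$ and $CC^*(\CB)$ on the complex $CC^*(\CA,\CB;\CM,\CM)$ computing $\Hom(\CM,\CM)$, obtained by inserting a Hochschild cochain into the left (Lagrangian) legs, respectively the right (matrix-factorization) legs, of the bimodule structure maps; passing to cohomology yields $L_\CM$ and $R_\CM$, with the superscript $1$ recording the leading component landing in the homotopy category. The standard invariance of Hochschild cohomology under derived equivalence, realized through an invertible bimodule, then gives that $L_\CM^1$ and $R_\CM^1$ are isomorphisms, which is the content of the two diagonal arrows.

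Third, and this is the heart of the matter, I would prove $L_\CM^1\circ\CO=R_\CM^1\circ\gamma\circ\ks$ on $\QH^*(X)$ by a term-by-term comparison of the two induced deformations of $\CM$. Fix an ambient class $\alpha$. On the $A$-side, $\CO(\alpha)$ is the Hochschild cocycle counting discs with one interior insertion of $\alpha$, and $L_\CM$ inserts $\alpha$ into the Fukaya inputs of the bimodule maps. On the $B$-side, $\ks(\alpha)$ is by construction the coefficient expressing the $\alpha$-derivative of $m_0^{\fb,b}$ as a multiple of the unit, i.e.\ the bulk-deformation of $\PO$ in the direction $\alpha$, exactly as recalled in the introduction; $\gamma$ re-presents it inside $HH^*(\OL{\mfai}(\PO))$, and $R_\CM$ feeds it into the matrix-factorization side of $\CM$. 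Because $\CM$ is assembled from the same disc moduli, both insertions are computed by the moduli space of discs with boundary on $\LL$ together with a second Lagrangian and one interior point labelled $\alpha$; hence $L_\CM^1(\CO(\alpha))$ and $R_\CM^1(\gamma(\ks(\alpha)))$ are the same bulk-deformed structure map of $\CM$, up to the defining relations of $\ks$ and $\gamma$.

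The hardest step will be the third one, made honest at the chain level rather than merely on cohomology. I expect the main obstacle to be constructing an explicit homotopy between the two chain-level module actions on $CC^*(\CA,\CB;\CM,\CM)$: although both are governed by discs with one interior insertion, the $A$-side action distributes $\alpha$ over all boundary components while the $B$-side action sees it only through the single coefficient defining $\ks$, so matching them requires a careful analysis of the codimension-one boundary strata of the relevant moduli, namely the bubbling of the interior insertion and the splitting of the boundary between $\LL$ and the second Lagrangian, together with the attendant sign and orientation bookkeeping. Once this homotopy is produced, commutativity of the diagram follows formally from the isomorphy of $L_\CM^1$ and $R_\CM^1$ established in the second step.
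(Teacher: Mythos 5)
Your overall architecture coincides with the paper's: you take the same graph bimodule $\CM=(\locmir^\LL\otimes 1)^*\CB_\Delta$, invoke Sheridan's $L^1_\CM$ and $R^1_\CM$ for a Morita bimodule to get the diagonal isomorphisms, and reduce commutativity to exhibiting a chain homotopy between $\CF_\alpha:=L^1_\CM(\CO(\alpha))$ and $\CG_\alpha:=R^1_\CM(\gamma(\ks(\alpha)))$. But your third step overstates what is true at the chain level, and your fourth step defers exactly the step that constitutes the proof. At chain level the two cochains are visibly different: $\CF_\alpha$ has components $\CF_\alpha^{r|1|0}$ for every $r\geq 0$, distributing the interior insertion $\fq(\alpha;\cdots)$ among arbitrarily many Fukaya inputs, whereas $\CG_\alpha$ is concentrated in length $0|1|0$ and acts by the single scalar $\ks(\alpha)$. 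So they are not ``the same bulk-deformed structure map of $\CM$,'' only homotopic, and the entire content of the theorem is the homotopy you leave unconstructed.

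That homotopy requires no new moduli spaces, boundary-stratum analysis, or orientation bookkeeping, contrary to your prediction of where the difficulty lies. The paper writes it down in closed form: $\xi_\alpha^{r|1|0}(a_1,\cdots,a_r,\UL{m}):=m_2^\CB\big(\fq(\alpha;a_1,\cdots,a_r,\bullet),\UL{m}\big)$, with $\xi_\alpha^{r|1|s}=0$ for $s\neq 0$; that is, one regards the disc counts with one interior insertion of $\alpha$ as morphisms of matrix factorizations and composes with $\UL{m}$. The identity $\CG_\alpha-\CF_\alpha=\delta\xi_\alpha$ is then verified purely algebraically from ingredients already in the literature: the differentiated $\AI$-relation (\ref{qinfty}) for the $\fq$-operations (this is the algebraic shadow of the degeneration you describe, but it is already established in \cite{FOOOtoric2}, so nothing geometric remains to be done); the unitality facts that $m_0^\LL(1)$ and $\fq_0^\LL(\alpha)=\widetilde{\ks}(\alpha)\cdot\be$ are multiples of the unit --- the latter, from \cite{FOOOtoric2}, is precisely the mechanism by which the all-$r$ cochain $\CF_\alpha$ collapses to the scalar $\ks(\alpha)$ appearing in $\CG_\alpha$, since unit insertions annihilate all higher products except $m_2$; and the vanishing $(\delta\xi_\alpha)^{r|1|s}=0$ for $s\neq 0$, which follows from the $\AI$-relation for $m_2^\CB$ (as $\CB$ carries no $m_{\geq 3}$) and which your outline omits entirely, even though it must be checked because $\CF_\alpha^{r|1|s}=\CG_\alpha^{r|1|s}=0$ in those lengths. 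In short, your plan is the paper's plan, but as written it stops exactly where the proof begins: the explicit formula for $\xi_\alpha$ and the observation that known relations suffice to verify it are missing.
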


%We remark that the proof of $\ks$ being a ring isomorphism is elegantly done in \cite{FOOOtoric3} by examining degenerations of holomorphic discs with interior markings. Our main point is that we can interprete it as a ``shadow" of homological mirror symmetry.
We will construct $\CM$ explicitly from the localized mirror functor between $Fu(X)$ and $\overline{\mfai}(\PO)$. The maps $L^1_\CM$ and $R^1_\CM$ from Hochschild cohomologies to the endomorphism space of $\CM$ are described in \cite{She15} and will be reviewed. The main content of the proof begins with an explicit description of the map $\gamma$ in terms of the bar resolution. Then the proof is a usual check of $\AI$-relations together with their differentiations. 

We organize the paper as follows: in Section \ref{sec:algebra} we recall algebraic preliminaries concerning $\AI$-categories. Then we recollect relevant definitions and facts of Lagrangian Floer theory on compact toric manifolds in Section \ref{sec:lagfloer}. Section \ref{sec:mfctg} deals with categories of matrix factorizations. In Section \ref{sec:mirror} we study mirror symmetries, both closed and open string ones. Finally, we state and prove our main theorem in Section \ref{sec:mainthm}.

\subsection*{Acknowledgements}
The author is grateful to Cheol-hyun Cho for his encouragement and interest in this work. He also thanks Bumsig Kim who invited him to KIAS where he began to work on this subject. The author was supported by NRF-2007-0093859.

%The organization of this paper is as follows. We start with algebraic preliminaries in Section 2 which are the most relevant in this work. We will encounter several definition concerning $\AI$-categories and bimodules. Then we recall Hochschild cohomologies and structures on them. In Section 3 we review homological mirror symmetry and relevant categories. In Section 4 we study Fukaya-Oh-Ohta-Ono's Kodaira-Spencer map(with emphasis on its geometric nature). In the final section we prove our main theorem.

\section{$\AI$-categories and $\AI$-bimodules}\label{sec:algebra}

\subsection{Basic definitions}
We refer readers to \cite{Gan,Seidelbook,She15} for more detailed explanations and proofs of the results discussed in this section.
\begin{definition}

The {\em Novikov field} is $\displaystyle\Lambda:=\Big\{\sum_{i \geq 0} a_i T^{\lambda_i} \mid a_i \in \CC, \lambda_i \in \RR, \lambda_i \to \infty \;\mathrm{as} \; i \to \infty \Big\}.$

A filtration $F^\bullet\Lambda$ of $\Lambda$ is given by \[F^\lambda \Lambda:=\Big\{\sum_{i\geq 0}a_i T^{\lambda_i} \mid \lambda_i \geq \lambda {\rm \;for\; all\; }i \Big\} \subset \Lambda.\]

We write \[F^+\Lambda:=\Big\{\sum_{i\geq 0}a_i T^{\lambda_i} \mid \lambda_i > 0 {\rm \;for\; all\; }i \Big\}.\]

The {\em Novikov ring} $\Lambda_0$ is $F^0\Lambda$.
%For later use we also define a graded ring \[\Lambda^{gr}:=\Lambda[e,e^{-1}]=\Big\{\sum_{i \geq 0} a_i T^{\lambda_i}e^{\mu_i} \mid a_i \in \CC, \lambda_i \in \RR, \lambda_i \to \infty \;\mathrm{as} \; i \to \infty, \mu_i \in \ZZ, {\rm deg}(e)=2 \Big\}.\]
\end{definition}

\begin{definition}
A {\em filtered $\AI$-category} $\calC$ over $\Lambda$ consists of a class of objects $Ob(\calC)$ and the set of morphisms $hom_{\calC}(A,B)$ for each pair of objects $A$ and $B$(we write $\calC(A,B)$ instead of $hom_\calC(A,B)$ for simplicity), with the following conditions:

\begin{enumerate}
 \item $\calC(A,B)$ is a filtered $\ZZ/2$-graded $\Lambda$-vector space for any $A,B\in Ob(\calC)$,
 \item for $k \geq 0$ there are multilinear maps of degree $1$
  \[m_k: \calC(A_0,A_1)[1] \otimes \calC(A_1,A_2)[1] \otimes \cdots \otimes \calC(A_{k-1},A_k)[1] \to \calC(A_0,A_k)[1]\]
  such that they preserve the filtration and satisfy the {\em $\AI$-relation}
  \begin{equation}\label{aieq}\sum_{k_1+k_2=n+1}\sum_{i=1}^{k_1}(-1)^\epsilon m_{k_1}(x_1,...,x_{i-1},m_{k_2}(x_i,...,x_{i+k_2-1}),x_{i+k_2},...,x_n)=0\end{equation} where $\epsilon=\sum_{j=1}^{i-1}(|x_j|+1).$ 
\end{enumerate}
The meaning of $m_0$ is that for each object $A$ we have $m_0^A \in hom^1(A,A)[1]=hom^2(A,A).$
%If $m_0 \neq 0$, $\calC$ is called a {\em curved} $\AI$-category. Otherwise, $\calC$ is called {\em strict.}
%If there is only one object, then $\calC$ is called an {\em $\AI$-algebra.} If only $m_1$ and $m_2$ are nonzero, then $\calC$ is called a {\em dg category.}
\end{definition}
For convenience, we write $|x_j|':=|x_j|+1$.

We can also define $\AI$-operations by coderivations. Let 
\[B\calC:=\bigoplus\limits_{X_0,\cdots,X_k \in Ob(\calC)} \calC(X_0,X_1)[1]\otimes\cdots\otimes \calC(X_{k-1},X_k)[1].\]
Then define 
\[ \hat{m}: B\calC \to B\calC,\]
\[ \hat{m}(x_0,\cdots,x_{k-1})=\sum_{l,j} (-1)^{\sum_{i=0}^l |x_i|'} x_0 \otimes \cdots \otimes x_l \otimes m_j(x_{l+1},\cdots,x_{l+j}),x_{l+j+1}\otimes \cdots\otimes x_{k-1}.\]
Now (\ref{aieq}) is equivalent to $\hat{m} \circ \hat{m}=0.$

\begin{definition}
Let $\calC$ and $\calC'$ be $\AI$-categories. An {\em $\AI$-functor} between $\calC$ and $\calC'$ is a collection $\CF=\{\CF_i\}_{i \geq 0}$ consisting of

\begin{itemize}
 \item $\CF_0: Ob(\calC) \to Ob(\calC'),$
 \item $\CF_k: \calC(A_0,A_1)[1] \otimes \cdots \otimes \calC(A_{k-1},A_k)[1] \to \calC'(\CF_0(A_0),\CF_0(A_k))[1]$, mulitlinear maps of degree $0$
\end{itemize}
which are subject to the following condition:
\begin{eqnarray*}
& \sum\limits_{i,j} (-1)^{|x_1|'+\cdots+|x_{i-1}|'} \CF_{i-j+k}(x_1,...,x_{i-1},m^\calC_{j-i+1}(x_i,...,x_j),x_{j+1},...,x_k)\\
=& \sum\limits_l m^{\calC'}_{l+1}(\CF_{i_1-1}(x_1,...,x_{i_1}),\CF_{i_2-i_1}(x_{i_1+1},...,x_{i_2}),...,\CF_{k-i_l}(x_{i_l+1},...,x_{k})).
\end{eqnarray*}

\end{definition}
Again, we can define an $\AI$-functor $\CF$ as an equation 
\[ \CF\circ \hat{m}=m\circ \hat{\CF}\]
where 
\[\hat{\CF}:B\calC \to B\calC,\]
\[ \hat{\CF}(x_1,\cdots,x_k)=\sum_{i_1,\cdots,i_l} \CF_{i_1}(x_1,\cdots,x_{i_1}) \otimes \cdots \otimes \CF_{i_l}(x_{k-i_l+1},\cdots,x_k).\]

For $x \in hom(A,B)$, we have
\begin{equation}\label{eq:m1}m_1^2(x)+m_2(m_0^A,x)+(-1)^{|x|+1}m_2(x,m_0^B)=0.\end{equation}
Hence if $m_0=0$, then $m_1^2=0$, but if $m_0$ is nonzero then $m_1$ does not have to be a differential. 
%\begin{remark}Even if $m_0=0$, from the $\AI$-relation the composition map $m_2$ might not satisfy associativity. So in general an $\AI$-category is {\em not} a category in the usual sense. \end{remark}

To achieve $m_1^2 =0$ so that we can think of $m_1$-homologies, we need to deform the original $\AI$-category. We recall more definitions.

\begin{definition}
For an object $A$ in an $\AI$-category $\calC$, $e_A \in \calC(A,A)$ is called a {\em unit} if it satisfies
 \begin{enumerate}
  \item $m_2(e_A,x)=x,\; m_2(y,e_A)=(-1)^{|y|}y$ for any $x \in \calC(A,B)$, $y \in \calC(B,A)$,
  \item $m_{k+1}(x_1,...,e_A,...,x_k)=0$ for any $k \neq 1$.
 \end{enumerate}
\end{definition}

\begin{definition}\label{def:wbc}
An element $b \in F^+\calC^1(A,A)$ is called a {\em weak bounding cochain} of $A$ if it is a solution of the weak Maurer-Cartan equation
\begin{equation}\label{eq:weakMC}
m(e^b):=m_0^A+m_1(b)+m_2(b,b)+ \cdots = \PO(A,b)\cdot e_A\end{equation}
for some $PO(A,b) \in \Lambda.$ If such a solution exists, then $A$ is called {\em weakly unobstructed}. 
 If there exists a solution $b$ such that $\PO(A,b)=0$, then $b$ is called a {\em bounding cochain} and $A$ is called {\em unobstructed}. $\PO(A,b)$ is called the {\em Landau-Ginzburg superpotential} of $b$. \end{definition}
We denote $\mathcal{M}_{weak}(A)$ be the set of weak bounding cochains of $A$. Then 
$\PO(A,\cdot)$ is a function on $\mathcal{M}_{weak}(A)$. We also define 
\[\CM_{weak}^\lambda(A):=\{b\in \CM_{weak}(A) \mid \PO(A,b)=\lambda\}.\]

Given an $\AI$-category $\calC$, under the assumption $\CM^\lambda_{weak}(A)$ is nonempty for some objects, we define a new $\AI$-category $\calC_\lambda$ as
\[Ob(\calC_\lambda)=\bigcup_{A \in Ob(\calC)}\{A\} \times \CM_{weak}^\lambda(A),\]
\[hom_{\calC_\lambda}((A_1,b_1),(A_2,b_2))=hom_{\calC}(A_1,A_2)\]
with the following $\AI$-structure maps
\[m_k^{b_0,...,b_k}:\calC_\lambda((A_0,b_0),(A_1,b_1)) \otimes \cdots \otimes {\calC_\lambda}((A_{k-1},b_{k-1}),((A_k,b_k)) \to {\calC_\lambda}((A_0,b_0),(A_k,b_k)),\]
\[m_k^{b_0,...,b_k}(x_1,...,x_k):=\sum_{l_0,...,l_k}m_{k+l_0+\cdots+l_k}(b_0^{l_0},x_1,b_1^{l_1},...,b_{k-1}^{l_{k-1}},x_k,b_k^{l_k})\]
where $x_i \in {\calC_\lambda}((A_i,b_i),(A_{i+1},b_{i+1})).$ $\AI$-relation is induced by the weak Maurer-Cartan equation (\ref{eq:weakMC}).

\begin{theorem}
Let $(A_0,b_0),(A_1,b_1) \in Ob(\calC_\lambda)$. Then $(m_1^{b_0,b_1})^2=0.$
\end{theorem}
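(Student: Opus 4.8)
The plan is to deduce the identity from the single arity-two $\AI$-relation (\ref{eq:m1}), read off in the deformed category $\calC_\lambda$, and then to exploit that after deformation the curvature term is a scalar multiple of the unit. Concretely, I would first establish the deformed analogue of (\ref{eq:m1}), namely
\[(m_1^{b_0,b_1})^2(x)+m_2^{b_0,b_0,b_1}(m_0^{b_0},x)+(-1)^{|x|+1}m_2^{b_0,b_1,b_1}(x,m_0^{b_1})=0\]
for $x\in hom_{\calC_\lambda}((A_0,b_0),(A_1,b_1))$, and then substitute the weak Maurer--Cartan equation (\ref{eq:weakMC}) together with the unit axioms.

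To obtain the displayed relation I would derive the needed instance directly rather than invoking the full deformed $\AI$-structure as a black box. Apply the original relation (\ref{aieq}) to each input word $(b_0,\dots,b_0,x,b_1,\dots,b_1)$ with $p$ copies of $b_0$ and $q$ copies of $b_1$, and sum over all $p,q\geq 0$; since $b_0,b_1\in F^+\calC^1$, these sums converge in the Novikov topology. In each term the inner operation $m_{k_2}$ either (i) consumes a consecutive block of $b_0$'s only (possibly the $m_0$-insertion), (ii) a block containing $x$, or (iii) a block of $b_1$'s only. Collecting case (ii) reorganizes, after relabelling the multi-indices, precisely into $(m_1^{b_0,b_1})^2(x)$; collecting case (i) gives $m_2^{b_0,b_0,b_1}(m_0^{b_0},x)$, since the inner sum over the consumed $b_0$-block is by definition $m_0^{b_0}=\sum_l m_l(b_0^l)$; and case (iii) gives the last summand. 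On these homogeneous degree-one inputs $b_i$ the signs $\epsilon$ of (\ref{aieq}) collapse to the signs appearing in (\ref{eq:m1}).

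It then remains to simplify. By (\ref{eq:weakMC}) and the hypothesis $(A_0,b_0),(A_1,b_1)\in Ob(\calC_\lambda)$ we have $m_0^{b_0}=\lambda\,e_{A_0}$ and $m_0^{b_1}=\lambda\,e_{A_1}$ with the \emph{same} scalar $\lambda$. Expanding $m_2^{b_0,b_0,b_1}(e_{A_0},x)=\sum m_{2+l_0+l_1+l_2}(b_0^{l_0},e_{A_0},b_0^{l_1},x,b_1^{l_2})$ and using the second unit axiom $m_{k+1}(\dots,e_{A_0},\dots)=0$ for $k\neq 1$, every term with an extra $b_0$ vanishes, leaving $m_2(e_{A_0},x)=x$; similarly $m_2^{b_0,b_1,b_1}(x,e_{A_1})=(-1)^{|x|}x$. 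Substituting, the last two summands become $\lambda x+(-1)^{|x|+1}\lambda(-1)^{|x|}x=\lambda x-\lambda x=0$, whence $(m_1^{b_0,b_1})^2(x)=0$.

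The only genuinely delicate point is the sign and index bookkeeping in the reorganization of the double sum over $p,q$; the conceptual content, by contrast, is the final cancellation, which works precisely because both objects lie over the \emph{same} potential value $\lambda$. Were the potentials different, the two surviving terms would leave $(\PO(A_0,b_0)-\PO(A_1,b_1))x$, obstructing $m_1^2=0$; this is exactly why the deformed category is stratified by the level sets $\CM_{weak}^\lambda$.
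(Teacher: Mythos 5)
Your proposal is correct and follows essentially the same route as the paper: the deformed arity-one $\AI$-relation $(m_1^{b_0,b_1})^2(x)+m_2^{b_0,b_0,b_1}(m(e^{b_0}),x)+(-1)^{|x|+1}m_2^{b_0,b_1,b_1}(x,m(e^{b_1}))=0$, followed by substitution of $m(e^{b_i})=\lambda\, e_{A_i}$ and cancellation of $\lambda x - \lambda x$ via the unit axioms, which is exactly the paper's argument. The only difference is that you re-derive the deformed relation from (\ref{aieq}) by the case analysis on where the inner operation lands, whereas the paper takes this as given from the construction of $\calC_\lambda$; your observation that distinct potential values would leave the obstruction $(\PO(A_0,b_0)-\PO(A_1,b_1))x$ is a correct and standard remark consistent with the paper's stratification by $\CM_{weak}^\lambda$.
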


\begin{proof}
Let $x \in {\calC_\lambda}((A_0,b_0),(A_1,b_1)).$ Then the $\AI$-equation is \[(m_1^{b_0,b_1})^2+m_2(m(e^{b_0}),x)+(-1)^{|x|+1}m_2(x,m(e^{b_1}))=0.\] By $m(e^{b_0})=\lambda\cdot e_{A_0},$ $m(e^{b_1})=\lambda \cdot e_{A_1}$ and by definition of units, \[m_2(m(e^{b_0}),x)+(-1)^{|x|+1}m_2(x,m(e^{b_1}))=0,\] so $(m_1^{b_0,b_1})^2=0.$
\end{proof} 

Hence, if there exist weak Maurer-Cartan solutions, we get a new $\AI$-category with $m_1^2=0$ by restricting to objects sharing certain value of a Landau-Ginzburg(LG for short) superpotential.

\begin{definition}
Let $\calC$ and $\calD$ be $\AI$-categories. A {\em $\calC$-$\calD$-bimodule} $\CM$ is the following data:
\begin{itemize}
\item For $V \in Ob(\calC)$ and $V' \in Ob(\calD)$, $\CM(V,V')$ is a graded $k$-vector space,
\item degree $1-r-s$ multilinear maps
\begin{align*} 
\mu^{r|1|s}: &\; \calC(V_0,V_1) \OCO \calC(V_{r-1},V_r)\otimes \CM(V_r,W_0)\otimes \calD(W_0,W_1)\OCO \calD(W_{s-1},W_s) \\
&\to \CM(V_0,W_s)\end{align*}
for any $V_i \in Ob(\calC)$ and $W_j \in Ob(\calD)$ satisfying
\begin{align*}
& \sum (-1)^{\epsilon}\mu^{\tiny{i+1|1|s-j-1}}(v_0,\cdots,v_i,\mu^{r-i-1|1|j+1}(v_{i+1},\cdots,v_{r-1},\UL{m},w_0,\cdots,w_j),w_{j+1},\cdots,w_{s-1}) \\
&+ \sum (-1)^{\epsilon}\mu^{i+r-j+1|1|s}(v_0,\cdots,v_i,m_{j-i}^\calC(v_{i+1},\cdots,v_j),v_{j+1},\cdots,v_{r-1},\UL{m},w_0,\cdots,w_{s-1}) \\
&+ \sum (-1)^{\epsilon'} \mu^{r|1|i+s-l+1}(v_0,\cdots,v_{r-1},\UL{m},w_0,\cdots,w_i,m_{l-i}^\calD(w_{i+1},\cdots,w_l),w_{l+1},\cdots,w_{s-1}) \\
&=0
\end{align*}
where 
\[
\epsilon=|v_0|'+\cdots+|v_i|', \;\;
\epsilon'=|v_0|'+\cdots+|v_{r-1}|'+|m|+|w_0|'+\cdots+|w_i|'.
\]
\end{itemize}
\end{definition}

\begin{definition}
A {\em pre-morphism of $\calC$-$\calD$-bimodules $\CF: \CM \to \CM'$ of degree $k$} is a collection of multilinear maps 
\begin{align*}
 \CF^{r|1|s}:&\;\calC(V_0,V_1)\OCO \calC(V_{r-1},V_r) \otimes \CM(V_r,W_0) \otimes \calD(W_0,W_1) \OCO \calD(W_{s-1},W_s) \\
 &\to \CM'(V_0,W_s)\end{align*}
of degree $k-r-s$, and the {\em composition} $\CF' \circ \CF$ is defined by
\begin{align*}
&(\CF' \circ \CF)(v_1,\cdots,v_r,\UL{m},w_1,\cdots,w_s) \\
:= & \sum (-1)^{|\CF|(|v_1|'+\cdots+|v_i|')}\CF'^{i|1|s-j}(v_1,\cdots,v_i,\CF^{r-i|1|j}(v_{i+1},\cdots,\UL{m},w_1,\cdots,w_j),w_{j+1},\cdots,w_s).
\end{align*}

The {\em differential} $\delta$ on premorphisms is defined by
\begin{align*}
&(\delta\CF)(v_1,\cdots,v_r,\UL{m},w_1,\cdots,w_s) \\
:= &\sum (-1)^{\epsilon_1} \mu^{i|1|s-j}_{\CM'}(v_1,\cdots,v_i,\CF^{r-i|1|j}(v_{i+1},\cdots,v_r,\UL{m},w_1,\cdots,w_j),w_{j+1},\cdots,w_s) \\
& -\sum (-1)^{\epsilon_2} \CF^{i|1|s-j}(v_1,\cdots,v_i,\mu^{r-i|1|s-j}(v_{i+1},\cdots,v_r,\UL{m},w_1,\cdots,w_j),w_{j+1},\cdots,w_s) \\
& - \sum \CF^{*|1|s}(\hat{m}^\calC(v_1,\cdots,v_r),\UL{m},w_1,\cdots,w_s) \\
& -\sum (-1)^{\epsilon_3} \CF^{r|1|*}(v_1,\cdots,v_r,\UL{m},\hat{m}^\calD(w_1,\cdots,w_s))
\end{align*}
where 
\[ \epsilon_1=|\CF|(|v_1|'+\cdots+|v_i|'),\; \epsilon_2=|v_1|'+\cdots+|v_i|',\; \epsilon_3=|v_1|'+\cdots+|v_r|'+|m|.\]
\end{definition}
More succintly, 
\[ \delta\CF=\mu_{\CM'}\circ \hat{\CF}-(-1)^{|\CF|}\CF \circ \hat{\mu}_\CM.\]
\begin{remark}
The definition of the degree of a pre-morphism of bimodules is motivated by the fact that the degree $k$ pre-morphism is indeed given by multilinear maps
\[ \calC[1]^{\otimes r} \otimes \CM \otimes \calD[1]^{\otimes s} \to \CM\]
of degree $k$. Once we accept such a perspective on degrees of maps, all signs obey only Koszul conventions and so we sometimes just write $(-1)^{{\rm Koszul}}$ for signs when it is sufficient. %Also note that in the 3rd and the 4th lines of the definition of $\delta \CF$ we did not specify signs occured by hat maps.
\end{remark}
The readers can easily check that $\calC$-$\calD$-bimodules together with premorphisms form a dg category. If $\CF: \CM \to \mathcal{N}$ is a premorphism such that $\delta \CF=0$ and its cohomology level map $[\CF^{0|1|0}]$ is an isomorphism, then $\CF$ is called a {\em quasi-isomorphism} and write $\CM \cong \mathcal{N}$.

\begin{example}
For an $\AI$-category $\calC$, the {\em diagonal bimodule} $\calC_\Delta$ is a $\calC$-$\calC$-bimodule defined by
\[ \calC_\Delta(X,Y):=\calC(X,Y)\]
with module structure maps 
\[ \mu^{r|1|s}:=m^\calC_{r+s+1}.\]
\end{example}

We recall some operations on bimodules.
\begin{definition}[Base change]
Let $\CF: \calC_1 \to \calC_2$ and $\CG: \calD_1 \to \calD_2$ be $\AI$-functors. Suppose that $\CM$ is a $\calC_2$-$\calD_2$-bimodule. Then a $\calC_1$-$\calD_1$-bimodule $(\CF\otimes \CG)^*\CM$ is defined on objects by
\[(\CF\otimes \CG)^*\CM(X,Y):=\CM(\CF(X),\CG(Y))\] for $X \in Ob(\calC_1)$, $Y\in Ob(\calD_1)$, and the structure maps are given by
\begin{align*}
&\mu^{r|1|s}_{(\CF\otimes \CG)^*\CM}(v_1,\cdots,v_r,\UL{m},w_1,\cdots,w_s) \\
:=&\sum_{k,l} \mu^{k|1|l}_\CM\big(\CF_{i_1}(v_1,\cdots),\cdots,\CF_{i_k}(\cdots,v_r),\UL{m},\CG_{j_1}(w_1,\cdots),\cdots,\CG_{j_l}(\cdots,w_s)\big).
\end{align*}
\end{definition}

%We also define the tensor product of bimodules just for completeness(for the definition of Morita equivalence), but we do not use the definition in the sequel.
\begin{definition}[Tensor product]
Let $\CM$ be a $\calC$-$\calD$-bimodule and $\mathcal{N}$ be a $\calD$-$\mathcal{E}$-bimodule for $\AI$-categories $\calC$, $\calD$ and $\mathcal{E}$. Then $\CM\otimes_\calD \mathcal{N}$ is a $\calC$-$\mathcal{E}$-bimodule such that
\[ (\CM \otimes_\calD \mathcal{N})(C,E)
:=\bigoplus_{D_1,...,D_k \in Ob(\calD)}\CM(C,D_1)\otimes \calD(D_1,D_2)\otimes \cdots \otimes
\calD(D_{k-1},D_k)\otimes \mathcal{N}(D_k,E)\]
for $C\in Ob(\calC)$ and $E \in Ob(\mathcal{E})$, and the structure maps are given as follows:
\begin{align*}
 &\mu^{0|1|0}_{\CM \otimes_\calD \mathcal{N}}(\UL{m}\otimes d_1 \OCO d_k \otimes \UL{n}) \\
 := & \sum \mu_\CM^{0|1|i}(\UL{m}, d_1,\cdots, d_i)\otimes d_{i+1} \OCO d_k \otimes \UL{n} \\
 & +\sum (-1)^{\rm Koszul} \UL{m} \otimes d_1 \OCO d_i \otimes m_{j-i}^\calD(d_{i+1},\cdots,d_j) \otimes d_{j+1} \OCO d_k \otimes \UL{n} \\
 & + \sum (-1)^{\rm Koszul} \UL{m} \otimes d_1 \OCO d_i \otimes \mu_{\mathcal{N}}^{k-i|1|0}(d_{i+1},\cdots, d_k,\UL{n}), \\ \\
&\mu_{\CM \otimes_\calD \mathcal{N}}^{r|1|0}(c_1,\cdots,c_r,\UL{m}\otimes d_1 \OCO d_k \otimes \UL{n}) \\
:= & \sum \mu_\CM^{r|1|p}(c_1,\cdots,c_r,\UL{m},d_1,\cdots,d_p)\otimes d_{p+1}\otimes \cdots \otimes d_k \otimes \UL{n}, \\ \\
 & \mu_{\CM \otimes_\calD \mathcal{N}}^{0|1|s}(\UL{m} \otimes d_1 \OCO d_k \otimes \UL{n},e_1,\cdots,e_s) \\
:= & \sum (-1)^{\rm Koszul} \UL{m} \otimes d_1 \otimes \cdots \otimes d_p \otimes \mu_\mathcal{N}^{k-p|1|s}(d_{p+1},\cdots,d_k,\UL{n},e_1,\cdots,e_s)
\end{align*}
and $\mu_{\CM \otimes_\calD \mathcal{N}}^{r|1|s}=0$ if $r$ and $s$ are both nonzero.
\end{definition}

\begin{definition}
Two $\AI$-categories $\calC$ and $\calD$ are {\em Morita equivalent} if there is a $\calC$-$\calD$-bimodule $\CM$ and a $\calD$-$\calC$-bimodule $\mathcal{N}$ such that
\[ \CM \otimes_\calD \mathcal{N} \cong \calC_\Delta, \;\; 
\mathcal{N} \otimes_\calC \CM \cong \calD_\Delta.\]
In this case, we call $\CM$ and $\CN$ be {\em Morita bimodules.}
\end{definition}

\subsection{Hochschild cohomology}
\begin{definition}
Let $\CM$ be an $\AI$-bimodule over $\calC$. We define the {\em Hochschild cochain complex} of $\CM$ 
\[ CH^*(\calC,\CM):=\prod_{X_0,...,X_k \in Ob(\calC)} hom^\bullet\big(\calC(X_0,X_1)[1] \OCO \calC(X_{k-1},X_k)[1],\CM(X_0,X_k)\big)[-1]
\]
with differential $b^*$ defined by
\begin{align*}
& b^*\phi (x_0,\cdots,x_{k-1}) \\
:=&  \sum \phi(\hat{m}(x_0,\cdots,x_{k-1})) \\
 &+\sum (-1)^{|\phi|'(|x_0|'+\cdots+|x_i|')} \mu_\CM^{i+1|1|k-l-1}(x_0,\cdots,x_i,\phi(x_{i+1},\cdots,x_l),x_{l+1},\cdots,x_{k-1}).
\end{align*}
Its cohomology of $b^*$ is called the {\em Hochschild cohomology of $\calC$ with coefficient in $\CM$.} If $\CM=\calC_\Delta$, then we write $CH^*(\calC):=CH^*(\calC,\calC_\Delta).$

\end{definition}
%\begin{example}
%The $\AI$-structure maps $\{ m_k\}$ of $\calC$ form a Hochschild cocycle in $CH^*(\calC)$.
%\end{example}

\begin{proposition}[\cite{Get}]
$CH^*(\calC)$ is an $\AI$-algebra with $\AI$-operations given by
\begin{align*} 
&M^k(\phi_1,\cdots,\phi_k)(x_1,\cdots,x_n)\\
:= &\sum (-1)^{\epsilon} m_*(\vec{x}_{i_1},\phi_1(\vec{x}_{j_1}),\vec{x}_{i_2},\phi_2(\vec{x}_{j_2}),\cdots,\vec{x}_{i_k},\phi_k(\vec{x}_{j_k}),\vec{x}_{i_{k+1}}).
\end{align*}
with $\vec{x}_{i_1}\otimes \vec{x}_{j_1}\otimes \cdots \otimes \vec{x}_{i_{k+1}}=x_1\OCO x_n,$ $M^0=0$, $M^1=b^*,$ and
\[ \epsilon=\sum_{l=1}^k |\phi_l|'(|\vec{x}_{i_1}|'+|\vec{x}_{j_1}|'+\cdots+|\vec{x}_{j_{l-1}}|'+|\vec{x}_{i_l}|').\]
\end{proposition}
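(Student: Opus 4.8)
The plan is to identify the operations $M^k$ with the \emph{brace operations} of Gerstenhaber--Voronov attached to the total $\AI$-structure $m=\sum_{j\ge0}m_j$, and then to deduce the whole family of $\AI$-relations for $M^\bullet$ as a formal consequence of the single equation $\hat m\circ\hat m=0$ recorded above. For Hochschild cochains $a,\psi_1,\dots,\psi_n$, write $a\{\psi_1,\dots,\psi_n\}$ for the cochain obtained by summing, with Koszul signs, over all order-preserving insertions of the outputs of $\psi_1,\dots,\psi_n$ into the arguments of $a$. In this notation one has $M^k(\phi_1,\dots,\phi_k)=m\{\phi_1,\dots,\phi_k\}$ for $k\ge2$, whereas $M^1=b^*$ is the full Gerstenhaber differential $[m,-]$; note that the arity-$2$ contribution to $m\{\phi_1,\phi_2\}$ already recovers the cup product, so no separate product term has to be added to $M^2$.

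First I would record the \emph{brace relations}, which read schematically
\[(a\{\psi_1,\dots,\psi_p\})\{\chi_1,\dots,\chi_q\}=\sum(-1)^{\mathrm{Koszul}}\,a\{\cdots,\chi_\bullet,\cdots,\psi_l\{\chi_\bullet,\dots\},\cdots\},\]
the sum running over all order-preserving distributions of $\chi_1,\dots,\chi_q$ into the slots of $a$ lying before, between, or inside the inserted $\psi_l$'s. These are purely combinatorial identities about insertion of multilinear maps, and they are cleanest in the coderivation model $CH^*(\calC)\cong\mathrm{Coder}(B\calC)$: each cochain extends to a coderivation of the tensor coalgebra, insertion becomes composition followed by corestriction, and every sign is then forced by the Koszul rule, matching the $\epsilon$ in the statement.

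Next I would rewrite the hypothesis $\hat m\circ\hat m=0$ as the Maurer--Cartan equation $m\{m\}=0$ and verify the $K$-th $\AI$-relation
\[\sum_{k_1+k_2=K+1}\sum_i\pm\,M^{k_1}(\phi_1,\dots,M^{k_2}(\phi_i,\dots,\phi_{i+k_2-1}),\dots,\phi_K)=0.\]
Expanding each inner and outer $M$ into braces produces a sum over configurations of two copies of $m$, one nested inside the other, with the $\phi_\bullet$'s distributed among their slots. Applying the brace relations repeatedly reorganizes this entire sum into brace expressions each carrying a factor $m\{m\}$; the contribution of the non-brace piece $(-)\{m\}$ hidden in $M^1=b^*$ is exactly what is needed to complete these factors. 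By Maurer--Cartan every such term vanishes, which gives the relation, and carrying this out uniformly in $K$ yields the $\AI$-algebra structure. The remaining checks $M^0=0$ and $M^1=b^*$ hold by definition.

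I expect the genuine work, and the only real obstacle, to be the sign bookkeeping: verifying that the prefactor $\epsilon$ in the definition of $M^k$ is precisely the sign generated by the Koszul rule when the brace relations are iterated, so that the terms assembling $m\{m\}$ cancel in the intended way rather than reinforcing. Passing to the coderivation picture, where signs are dictated by the degrees of the extended coderivations, is the most reliable way to keep this under control; alternatively one may simply invoke \cite{Get}, since this construction is exactly the $\AI$-structure induced on a brace algebra by a Maurer--Cartan element.
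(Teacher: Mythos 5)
The paper offers no proof of this proposition at all—it is quoted directly from Getzler's paper \cite{Get}—and your brace-operation argument is exactly the proof in that cited source: set $M^{k\geq 2}(\phi_1,\dots,\phi_k)=m\{\phi_1,\dots,\phi_k\}$ and $M^1=[m,-]$, and derive the $\AI$-relations from the pre-Jacobi (brace) identities combined with the Maurer--Cartan equation $m\{m\}=0$, which is the equation $\hat{m}\circ\hat{m}=0$. Your two substantive observations—that $M^1$ must be the full differential $b^*$, whose extra piece $\pm\,\phi\{m\}$ is precisely what completes the $m\{m\}$ factors in the cancellation (so that $M^0=0$ rather than a curvature term is consistent), and that the signs, including the stated $\epsilon$, are forced once cochains are viewed as coderivations of $B\calC$—are the genuine content of the verification, and you have them right.
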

In particular, the binary product $M^2$ induces the {\em Yoneda product} $\cup$ on the cohomology $HH^*(\calC)$ by
\[ \phi \cup \psi:=(-1)^{|\phi|} M^2(\phi,\psi).\]
Then the Yoneda product is associative. 

\subsection{Morita invariance of Hochschild cohomology}
Let $\CA$ and $\CB$ be Morita equivalent with Morita bimodules $\CM$ and $\CN$ which are over $\CA$-$\CB$ and $\CB$-$\CA$ respectively. Recall the following:

\begin{lemma}[\cite{She15}]\label{lmrm}
The following are $\AI$ quasi-isomorphisms
\[ L_\CM: CH^*(\CA) \to hom^*_{\CA-\CB}(\CM,\CM),\]
\[ R_\CM: CH^*(\CB)^{op} \to hom^*_{\CA-\CB}(\CM,\CM)\]
which are defined as follows:
\begin{align*} 
&L_\CM^p(\phi_1,\cdots,\phi_p)(a_1,\cdots,a_r,\UL{m},b_1,\cdots,b_s) \\
:=& \sum (-1)^{\rm Koszul} \mu_\CM^{*|1|s}(\vec{a}_1,\phi_1(\vec{a}_2),\vec{a}_3,\cdots,\phi_p(\vec{a}_{2p}),\vec{a}_{2p+1},\UL{m},b_1,\cdots,b_s),
\end{align*}
\begin{align*}
&R_\CM^p(\phi_1,\cdots,\phi_p)(a_1,\cdots,a_r,\UL{m},b_1,\cdots,b_s) \\
:=& \sum (-1)^{\rm Koszul} \mu_\CM^{r|1|*}(a_1,\cdots,a_r,\UL{m},\vec{b}_1,\phi_p(\vec{b}_2),\vec{b}_3,\cdots,\phi_1(\vec{b}_{2p}),\vec{b}_{2p+1}).
\end{align*}
In particular, $L_\CM^1$ and $R_\CM^1$ induce ring isomorphisms on cohomology level.
\end{lemma}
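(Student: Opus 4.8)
The plan is to prove that $L_\CM$ and $R_\CM$ are $\AI$-quasi-isomorphisms, which is the content of Lemma \ref{lmrm} attributed to \cite{She15}. Since the final assertion is that $L_\CM^1$ and $R_\CM^1$ induce ring isomorphisms on cohomology, I would reduce the statement to two parts: first, that $L_\CM$ and $R_\CM$ are $\AI$-morphisms (so their linear terms $L_\CM^1$, $R_\CM^1$ are chain maps compatible with the Yoneda product up to homotopy); and second, that these linear terms are quasi-isomorphisms of complexes. The compatibility with ring structure on cohomology then follows formally from the fact that a degree-$0$ $\AI$-morphism whose linear part is a quasi-isomorphism induces a ring isomorphism on cohomology of the associated $\AI$-algebras.

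First I would verify that $L_\CM$ (and symmetrically $R_\CM$) is genuinely an $\AI$-functor from $CH^*(\CA)$ to the dg algebra $\hom^*_{\CA-\CB}(\CM,\CM)$. This means checking the $\AI$-relation for the collection $\{L_\CM^p\}$: one expands $\sum \pm M^*(\cdots) $ on the Hochschild side against $\sum \pm \, (\text{composition/differential of premorphisms})$ on the bimodule-endomorphism side. Concretely, I would plug the defining formula for $L_\CM^p$ into the bimodule-morphism differential $\delta$ and composition, and match it term-by-term with the images under $L_\CM$ of the Hochschild $\AI$-operations $M^k$. The bookkeeping is governed entirely by where the inserted cochains $\phi_i$ land among the $\vec a$-slots and where the structure map $\mu_\CM$ acts; every sign is a Koszul sign once one adopts the bar-degree conventions set up in the Remark, so I would invoke $(-1)^{\rm Koszul}$ rather than tracking signs by hand. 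The essential algebraic input is that $\CM$ is an honest $\AI$-bimodule, so that the bimodule relation $\sum\pm\,\mu^{\ast|1|\ast}(\cdots\mu^{\ast|1|\ast}\cdots)=0$ produces exactly the cancellations needed to close the $\AI$-relation for $L_\CM$.

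Second I would establish that $L_\CM^1$ and $R_\CM^1$ are quasi-isomorphisms, and this is where the Morita hypothesis enters decisively. The clean way is to exploit the Morita equivalence $\CM\otimes_\CB\CN\cong\CA_\Delta$ and $\CN\otimes_\CA\CM\cong\CB_\Delta$ to construct, up to homotopy, an inverse to $L_\CM^1$: tensoring an endomorphism of $\CM$ with $\CN$ and using the quasi-isomorphisms with the diagonal bimodules relates $\hom_{\CA-\CB}(\CM,\CM)$ back to $\hom_{\CA-\CA}(\CA_\Delta,\CA_\Delta)$, and the latter computes $CH^*(\CA)$ since self-maps of the diagonal bimodule are precisely Hochschild cochains. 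Thus I would assemble the diagram
\[
CH^*(\CA)\xrightarrow{\,L_\CM\,}\hom_{\CA-\CB}(\CM,\CM)\xrightarrow{\,-\otimes_\CB\CN\,}\hom_{\CA-\CA}(\CM\otimes_\CB\CN,\CM\otimes_\CB\CN)\xrightarrow{\,\cong\,}CH^*(\CA)
\]
and verify that the composite is homotopic to the identity, with the analogous statement on the other side; this forces $L_\CM^1$ to be a quasi-isomorphism.

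The main obstacle will be the second step rather than the first. Checking the $\AI$-relation for $L_\CM$ is long but mechanical — it is ``a usual check of $\AI$-relations'' in the language of the paper. The genuinely delicate point is producing the explicit chain homotopy witnessing $(-\otimes_\CB\CN)\circ L_\CM\simeq \id$ after identifying the tensored bimodule with the diagonal; one must show that the Morita quasi-isomorphism $\CM\otimes_\CB\CN\cong\CA_\Delta$ intertwines the two endomorphism dg algebras up to homotopy, and the homotopy has to be compatible with the bar-differential $\hat\mu$. I would handle this by functoriality of $\hom(-,-)$ under the quasi-isomorphism of bimodules together with the fact that $\otimes_\CB$ is an $\AI$-bifunctor, so that a quasi-isomorphism of bimodules induces a quasi-isomorphism on endomorphism complexes; the remaining work is to identify the induced map on $CH^*$ with the identity on cohomology, which reduces to the unitality of the Morita data. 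Since this is precisely the content proved in \cite{She15}, I would cite that reference for the homotopy-inverse construction and present only the verification that the formulas for $L_\CM$ and $R_\CM$ satisfy the $\AI$-relations, which suffices to name $L_\CM^1,R_\CM^1$ as ring isomorphisms on cohomology.
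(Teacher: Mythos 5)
The paper gives no proof of this lemma at all---it is quoted verbatim from \cite{She15}---and your outline (first checking that the formulas for $L_\CM$, $R_\CM$ satisfy the $\AI$-relations using the bimodule equations for $\CM$, then inverting $L_\CM^1$ up to homotopy by tensoring with $\CN$, identifying $\CM\otimes_\CB\CN\cong\CA_\Delta$ and using $\hom_{\CA-\CA}(\CA_\Delta,\CA_\Delta)\simeq CH^*(\CA)$, with the homotopy-inverse details deferred back to \cite{She15}) is precisely the argument of that cited source. Your reduction is sound and matches the paper's treatment, so this is essentially the same approach.
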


Later we will examine the maps $L_\CM^1$ and $R_\CM^1$ carefully for the Morita bimodule between mirror pairs $Fu(X)$ and $\overline{\mfai}(W)$.

\section{Lagrangian Floer theory on compact toric manifolds}\label{sec:lagfloer}

\subsection{Review on Fukaya categories}
Let $(X,\omega)$ be a symplectic manifold and $L_0,\cdots,L_{k}$ be its Lagrangian submanifolds. Pick a compatible almost complex structure $J_0$ on $(X,\omega)$. For a while we only assume an extreme case of intersecting patterns of those Lagrangians: there are only transverse intersections among $L_0,\cdots,L_{k}$. But we will also deal with the other extreme case $L_0=\cdots=L_{k}$ later.

Let $CF(L_i,L_{i+1}):= \bigoplus\limits_{p \in L_i \cap L_{i+1}} \Lambda \cdot p$ be a $\ZZ/2$-graded vector space over $\Lambda$. The degree is defined by the Maslov index of each intersection point. Let $\beta \in \pi_2(X, L_0 \cup \cdots \cup L_k)$. Define a moduli space 
\begin{align*}
&\widehat{\CM}(p_0,\cdots,p_{k-1};q;\beta;J_0) \\
:=& \{u:(D^2,z_0,\cdots,z_{k-1},z_k) \stackrel{J_0{\rm -hol}}{\longrightarrow} (X,p_0,\cdots,p_{k-1},q) \mid \\
&\;\; u(\wideparen{z_i z_{i+1}}) \subset L_{i+1}, u(\wideparen{z z_0})\subset L_0, [u]=\beta \}.
\end{align*}

and
$\CM(p_0,\cdots,p_{k-1};q;\beta;J_0)$ be its stable map compactification. In general such moduli spaces may not be transversal, so we apply Kuranishi perturbations and achieve moduli spaces of correct dimensions. We denote such `correct' moduli space by $\CM(p_0,\cdots,p_{k-1};q;\beta)$, omitting $J_0$. Suppose that we can take suitable Kuranishi perturbations so that boundary components of the moduli spaces are expressed as fiber products of moduli spaces of the same kind with respect to evaluation maps at boundary marked points(these technical assumptions can be indeed achieved for compact toric manifolds, see \cite{FOOOtoric1,FOOOtoric2,FOOOtoric3}).

Now, define
\[ m_k(p_0,\cdots,p_{k-1}):= \sum_{\stackrel{\beta\in \pi_2(X,L_0\cup \cdots \cup L_k)}{q\in L_k \cap L_0}}\#\CM(p_0,\cdots,p_{k-1};q;\beta)\cdot T^{\omega(\beta)}\cdot q.\]
We only count the moduli space when the correct virtual dimension is zero. The above technical assumptions enables us to say that these operations satisfy a filtered $\AI$-relation (\ref{aieq}).

\begin{definition}
The {\em Fukaya category} $Fu(X)$ is an $\AI$-category whose objects are Lagrangian submanifolds and morphism spaces are $CF(L,L')$. The $\AI$-structure is given by above operations $\{m_k\}_{k\geq 0}$. The {\em derived Fukaya category} $D^\pi Fu(X)$ is the homotopy category of the triangulated envelope of $Fu(X)$ followed by Karoubi completion.
\end{definition}
In general, there might be nonzero $m_0$, which comes from holomorphic discs whose boundaries are on Lagrangian submanifolds. More precisely, for $\beta \in \pi_2(X,L)$, let $\CM_1^{main}(\beta)$ be the (Kuranishi perturbed) moduli space of holomorphic discs of homotopy classes $\beta$ with a boundary marked point. Define $m_{0,\beta}^L:=PD((ev_0)_* \CM_1(\beta))$ which is a cocycle in L, and let $m_0^L:=\sum\limits_{\beta \in \pi_2(X,L)}m_{0,\beta}^L.$ Higher $\AI$-{\em algebra} operations on a single Lagrangian are defined as follows. Let $\CM_{k+1}^{main}(\beta)$ be the moduli space of holomorphic discs of class $\beta$ with $k+1$ boundary marked points which respect the counterclockwise cyclic order. Let 
\[ev_0^\beta,\cdots,ev_k^\beta: \CM_{k+1}^{main}(\beta) \to L\] be evaluation maps at marked points $(z_0,\cdots,z_k)$ of each disc. Let $p_1,\cdots,p_{k}$ be cycles of $L$. Then 
\[m_{k,\beta}^L: H^*(L;\Lambda)^{\otimes k} \to H^*(L;\Lambda)\] is defined by
\[ (PD(p_1),\cdots PD(p_k)) \mapsto PD[(ev_0^\beta)_!(ev_1^\beta \times \cdots \times ev_k^\beta)^*(p_1 \times \cdots \times p_k)]. \]
Define \[ m_k^L:=\sum_\beta T^{\omega(\beta)}\cdot m_{k,\beta}^L.\]
We refer to \cite{FOOO,FOOOtoric1} for the well-definedness of above operations on the de Rham model of each single Lagrangian submanifold. If we take Kuranishi perturbations successfully, then the fundamental class $[L]$ plays the role of the $\AI$-unit, so the following definition makes sense(see Definition \ref{def:wbc}).

\begin{definition}
We say $L$ is {\em weakly unobstructed} if there is a chain $b\in C^*(L;\Lambda)$ such that for some constant $\PO(L,b)\in \Lambda$,
\[(m_0^L)^b:=m_0^L+m_1^L(b)+m_2^L(b,b)+\cdots=c\cdot [L].\]
The constant $\PO(L,b)$ is called the {\em superpotential} of $(L,b)$.
\end{definition}

\subsection{Floer theory on compact toric manifolds}
Let $X$ be a compact toric manifold and $\pi:X \to P$ be the moment map. For $\bu \in Int(P)$, let $L(\bu):=\pi^{-1}(\bu)$, i.e. a Lagrangian torus fiber over $\bu$. Recall that $P$ can be expressed as
\[ P=\{ \bu \in \RR^n \mid l_j(\bu)\geq 0, j=1,\cdots,m\}\]
for certain affine functionals $l_j: \RR^n \to \RR.$ Let $\vec{v}_j$ be the primitive normal vector to the facet $\partial_i P=\{ \bu\in \RR^n \mid l_j(\bu)=0\}.$ Let
\[ \vec{v}_j=(v_{j,1},\cdots,v_{j,n}).\]

Based on the classification of holomorphic discs bounded by torus fibers \cite{CO} and the transversality result \cite{FOOOtoric1}, we have the following substantial result.
\begin{theorem}[\cite{FOOOtoric1}]
\begin{itemize}
\item Every torus fiber $L(\bu)$ of a compact toric manifold $X$ is weakly unobstructed and $H^1(L(\bu);\Lambda_0)$ is contained in the weak Maurer-Cartan solution space.
\item Let $\{ \be_i\}_{i=1,\cdots,n}$ be the basis of $L(\bu)$, consisting of elements representing $dt_i$. Let $\{ x_i\}_{i=1,\cdots,n}$ be the dual basis and $b=\sum_i x_i \be_i$. Then the potential $\PO(L(\bu),b)$ has the expression
\[ \PO(L(\bu),b)=z_1+\cdots+z_m+\sum_{k=1}^N P_k(z_1,\cdots,z_m)\]
where $z_i:=T^{l_j(\bu)} (y_1^\bu)^{v_{j,1}}\cdots (y_n^\bu)^{v_{j,n}}$ and $y_1^\bu:=e^{x_i}$.
If $X$ is furthermore Fano, then $P_k$ are all zero.
\item The above expression $\PO(L(\bu),b)$ only depends on $X$(and independent of $\bu$), so we denote the above $\PO(L(\bu),b)$ just by $\PO(x_1,\cdots,x_n)$, or $\PO$ for short.
\end{itemize}
\end{theorem}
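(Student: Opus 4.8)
The plan is to reduce the whole statement to the Cho--Oh classification of holomorphic discs \cite{CO} together with the $T^n$-equivariant transversality package of \cite{FOOOtoric1}. First I would recall that every holomorphic disc of Maslov index $2$ bounded by $L(\bu)$ is, up to the $T^n$-action and reparametrization, one of the $m$ \emph{basic discs} $\beta_j$, one for each facet $\partial_j P$; its symplectic area is $\omega(\beta_j)=l_j(\bu)$ and its boundary homotopy class is $\partial\beta_j=\vec v_j=(v_{j,1},\dots,v_{j,n})\in H_1(L(\bu);\ZZ)\cong\ZZ^n$. The crucial geometric input is that the one-pointed moduli space $\CM_1^{main}(\beta_j)$ is a smooth manifold of dimension $n$ on which $ev_0$ is a diffeomorphism onto $L(\bu)$, so that $m_{0,\beta_j}^{L}=PD((ev_0)_*[\CM_1^{main}(\beta_j)])=[L(\bu)]$ is exactly the fundamental class, i.e.\ the $\AI$-unit.

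For weak unobstructedness I would fix a $T^n$-equivariant system of Kuranishi perturbations, available precisely because the toric complex structure and the fibers are torus-invariant. Plugging $b=\sum_i x_i\be_i\in H^1(L(\bu);\Lambda_0)$ into the curved $\AI$-algebra and applying the divisor axiom, the sum over the number and cyclic position of the $b$-insertions on a disc of class $\beta$ collapses (including the combinatorial $1/k!$ factors assembling into a series) to the factor $\exp\langle\partial\beta,[b]\rangle$ times the unmarked contribution $m_{0,\beta}^L$, so that $(m_0^L)^b=\sum_\beta T^{\omega(\beta)}\exp\langle\partial\beta,[b]\rangle\,m_{0,\beta}^L$. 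The issue is then to show the right-hand side is a scalar multiple of $[L(\bu)]$. This is the step I expect to be the main obstacle: because the grading here is only $\ZZ/2$, the outputs of discs other than the basic ones are even-degree forms that are not distinguished from the unit by degree, so one cannot simply read off $H^0$ — the components in $H^2(L(\bu))\oplus H^4(L(\bu))\oplus\cdots$ must be shown to vanish geometrically. I would handle this exactly as in \cite{FOOOtoric1}, exploiting that the equivariant evaluation images are $T^n$-orbits so that the non-unit components are forced to cancel; this is where the delicate equivariant virtual perturbation theory is genuinely needed.

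Granting that, the coefficient of $[L(\bu)]$ is by definition $\PO(L(\bu),b)$. For a basic disc one has $\exp\langle\partial\beta_j,[b]\rangle=\prod_i e^{v_{j,i}x_i}=\prod_i(y_i^\bu)^{v_{j,i}}$ with $y_i^\bu=e^{x_i}$, and together with $T^{\omega(\beta_j)}=T^{l_j(\bu)}$ this is precisely the monomial $z_j$; summing over $j$ yields the leading term $z_1+\cdots+z_m$. The remaining terms $P_k$ record contributions of index-$2$ configurations carrying sphere bubbles, whose boundary classes still decompose through the $\vec v_j$. When $X$ is Fano, $c_1$ is positive on every effective sphere class, so the only disc classes of Maslov index $2$ are the basic classes $\beta_j$ themselves and no extra configuration can contribute; hence all $P_k$ vanish and $\PO=z_1+\cdots+z_m$.

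Finally, for the independence of $\bu$ I would perform the change of variables absorbing the base point. Writing the affine functional as $l_j(\bu)=\sum_i v_{j,i}u_i+\lambda_j$ with $\bu=(u_1,\dots,u_n)$ and setting $\hat y_i:=T^{u_i}y_i^\bu$, each monomial becomes $z_j=T^{\lambda_j}\prod_i \hat y_i^{\,v_{j,i}}$, which no longer involves $\bu$. Thus the superpotential, expressed in the variables $\hat y_i$ (equivalently in the $x_i$), is the same function for every interior $\bu$, justifying the notation $\PO(x_1,\dots,x_n)$.
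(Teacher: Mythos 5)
The paper does not actually prove this theorem: it is quoted verbatim from \cite{CO,FOOOtoric1} as background, so there is no internal proof to compare against. Your sketch reconstructs the standard argument of Cho--Oh and Fukaya--Oh--Ohta--Ono, and in outline it is the correct one: the classification of Maslov index two basic discs $\beta_j$ with $\omega(\beta_j)=l_j(\bu)$ and $\partial\beta_j=\vec{v}_j$, the divisor relation collapsing the $b$-insertions to the factor $\exp\langle\partial\beta,b\rangle$, Fano positivity of $c_1$ on effective sphere classes killing all non-basic Maslov-two configurations (hence $P_k=0$), and the substitution $\hat{y}_i=T^{u_i}e^{x_i}$ absorbing the base point $\bu$, which is exactly the coordinate change used in \cite{FOOOtoric1} to prove $\bu$-independence.

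One substantive correction: the step you flag as ``the main obstacle'' is mislocated, and your proposed mechanism for it is slightly off. In the de Rham model of \cite{FOOOtoric1} the grading at chain level is honestly $\ZZ$-graded, not merely $\ZZ/2$: with $k$ degree-one inserts the moduli space $\CM_{k+1}(\beta)$ has dimension $n+\mu(\beta)+k-2$, so the pushforward $(ev_0)_!$ produces an output form of degree $k-(\mu(\beta)+k-2)=2-\mu(\beta)$, independent of $k$. Since every nonconstant holomorphic disc bounded by a torus fiber has $\mu\geq 2$, all classes with $\mu(\beta)>2$ contribute forms of negative degree, which vanish identically; there is no cancellation among components in $H^2\oplus H^4\oplus\cdots$ to arrange. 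Only $\mu(\beta)=2$ contributes, a degree-zero form, and it is precisely here (and only here) that $T^n$-equivariance of the perturbations is invoked: a $T^n$-invariant function on $L(\bu)\cong T^n$ is constant, hence a multiple of the unit. Equivariance (together with the existence of the equivariant Kuranishi structure, the genuinely hard input in the non-Fano case, which you correctly defer to \cite{FOOOtoric1}) is thus needed for this constancy statement, not to sidestep a failure of degree arguments. With that adjustment your proposal matches the cited proof.
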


We are particularly interested in $Crit(\PO)$ because of the following fact:
\begin{theorem}\cite{CO,FOOOtoric1}
If $b=\sum a_i \be_i \in H^1(L(\bu),\Lambda_0)$ then the following are equivalent:
\begin{itemize}
\item $\displaystyle \frac{\partial\PO}{\partial x_i}(b)=0$ for all $i=1,\cdots,n$.
\item $HF((L(\bu),b),(L(\bu),b);\Lambda) \cong H^*(T^n;\Lambda).$
\item $HF((L(\bu),b),(L(\bu),b)) \neq 0.$
\end{itemize}
\end{theorem}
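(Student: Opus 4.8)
The plan is to identify the Floer complex $CF((L(\bu),b),(L(\bu),b))$ with a Koszul complex over the field $\Lambda$ and to read off its homology. As a $\ZZ/2$-graded $\Lambda$-vector space this complex is $H^*(T^n;\Lambda)=\Lambda^*\langle \be_1,\dots,\be_n\rangle$, and the Floer differential is the deformed operation $m_1^{b,b}$ (both bounding cochains equal $b$, since we work on a single fiber).

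The computational heart is the identity
\[ m_1^{b,b}(\be_i)=\frac{\partial\PO}{\partial x_i}(b)\cdot[L] \qquad(i=1,\dots,n). \]
I would derive it by differentiating the weak Maurer--Cartan equation $(m_0^L)^b=\PO\cdot[L]$ in the direction $\partial/\partial x_i$. Since $b=\sum_k x_k\be_k$ has $\partial b/\partial x_i=\be_i$, differentiating $e^b=\sum_k b^{\otimes k}$ inserts $\be_i$ into every tensor slot, and applying $m$ assembles exactly the sum $\sum_{l_0,l_1}m_{1+l_0+l_1}(b^{l_0},\be_i,b^{l_1})$ defining $m_1^{b,b}(\be_i)$; the right-hand side differentiates to $\partial_{x_i}\PO\cdot[L]$. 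Equivalently, this follows from the Cho--Oh classification of Maslov index $2$ discs: the facet $\partial_j P$ contributes with weight $v_{j,i}z_j$, and indeed $\partial_{x_i}\PO=\sum_j v_{j,i}z_j$ because $\partial_{x_i}z_j=v_{j,i}z_j$. Extending this from the generators to all of $\Lambda^*\langle\be_i\rangle$ by the Leibniz rule for $m_1^{b,b}$, one identifies the complex with the Koszul complex $K(f_1,\dots,f_n;\Lambda)$, $f_i:=\partial_{x_i}\PO(b)\in\Lambda$, whose differential is contraction against $(f_1,\dots,f_n)$ (lowering exterior degree by one, i.e.\ raising $\ZZ/2$-degree by one).

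The three equivalences then become homological algebra over a field. If $b\in Crit(\PO)$, all $f_i=0$, so $m_1^{b,b}=0$ and $HF\cong\Lambda^*\langle\be_i\rangle=H^*(T^n;\Lambda)$: the first item implies the second. The second implies the third since $H^*(T^n;\Lambda)\neq0$. For the third implies the first I argue by contraposition: if some $f_i\neq0$ then, $\Lambda$ being a field, $f_i$ is invertible and $h:=f_i^{-1}(\be_i\wedge-)$ satisfies $m_1^{b,b}h+hm_1^{b,b}=\id$, so the Koszul complex is acyclic and $HF=0$. This closes the cycle of implications.

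The main obstacle is the geometric input behind the second paragraph: pinning down $m_1^{b,b}$ as precisely the Koszul contraction on all of $\Lambda^*\langle\be_i\rangle$, not merely on the generators. The formula on each $\be_i$ is clean and formal, but the Leibniz property of $m_1^{b,b}$ over the deformed product---equivalently, the absence of higher-disc corrections to the Koszul form---is not a consequence of the $\AI$-relations alone, since $m_1$ is only a derivation of $m_2$ up to the homotopy recorded by $m_3$. Here one must genuinely invoke the disc classification of \cite{CO,FOOOtoric1} and the Fano hypothesis, so that only Maslov index $2$ discs contribute. A secondary, milder point is checking that $f_i=\partial_{x_i}\PO(b)$ is a well-defined element of $\Lambda$; this is unproblematic in the Fano case, where $\PO$ is a Laurent polynomial in the monomials $z_j$.
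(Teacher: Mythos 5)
A preliminary remark: the paper does not prove this statement at all --- it is imported verbatim from \cite{CO,FOOOtoric1} --- so your proposal can only be measured against the cited sources, and in outline it does reconstruct their argument. The computational heart, $m_1^{b,b}(\be_i)=\frac{\partial\PO}{\partial x_i}(b)\cdot e$, obtained by differentiating the weak Maurer--Cartan equation in the direction $\be_i$, is exactly the mechanism of \cite{FOOOtoric1} (and is the same differentiation trick this paper itself uses to define $\widetilde{\ks}$ and to derive equation (3.1)). The reading of the complex as a Koszul-type complex is Cho--Oh's. So the strategy is sound; but your diagnosis of the ``main obstacle'' is off, and this matters because the gap you flag closes more easily than you think, while a different subtlety is the real one. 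In the $\AI$-relation (2.1) with two inputs, the only terms are $m_1m_2$, $m_2(m_1\otimes\mathrm{id})$, $m_2(\mathrm{id}\otimes m_1)$, and $m_3$ \emph{with an $m_0$ insertion}; there is no intrinsic ``$m_3$-homotopy'' correction to the Leibniz rule (that homotopy governs associativity of $m_2$, not the derivation property of $m_1$). Since $m_0^{b}=\PO(b)\cdot e$ is a multiple of the strict unit and $m_{k+1}$ with a unit input vanishes for $k\neq 1$, the curvature terms die and $m_1^{b,b}$ is an honest odd derivation of $m_2^{b,b}$. The genuine caveat is instead that $m_2^{b,b}$ is not the wedge product but a Clifford-type deformation of it (this is Cho's product computation), so the complex is a filtered deformation of the Koszul complex rather than the Koszul complex on the nose.

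With that corrected, both delicate implications go through without the literal Koszul identification. For $(1)\Rightarrow(2)$: the $T$-adically leading part of $m_2^{b,b}(\be_i,\cdot)$ is $\be_i\wedge\cdot$, so iterated $m_2^{b,b}$-products of the $\be_i$ span the complex by a filtration argument over $\Lambda$; since the derivation $m_1^{b,b}$ kills $e$ and all $\be_i$ when every $f_i=0$, it vanishes identically and $HF\cong H^*(T^n;\Lambda)$. For $(3)\Rightarrow(1)$: replace your homotopy $f_i^{-1}(\be_i\wedge-)$, which presupposes the Koszul shape, by $h(x):=f_i^{-1}m_2^{b,b}(\be_i,x)$; exact Leibniz plus $m_2(e,x)=x$ gives $m_1h\pm hm_1=\pm\,\mathrm{id}$. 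Simpler still, and closer to the argument of \cite{FOOOtoric1}: $e=m_1^{b,b}(f_i^{-1}\be_i)$ is exact, and since $(m_1^{b,b})^2=0$ and $m_2$ descends to a unital product on $HF$, vanishing of the unit class forces $HF=0$. Finally, note that the Fano hypothesis you invoke is not needed for the statement as cited --- \cite{FOOOtoric1} prove it for all compact toric manifolds with the full potential including the $P_k$ --- so the disc classification of \cite{CO} is a legitimate shortcut in this paper's Fano setting, but the algebraic route above is what makes the theorem true in the stated generality.
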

%We call such torus fibers {\em strongly balanced}.

Finally we recall the following generation result.
\begin{theorem}[\cite{AFOOO,EL}]
For a compact toric Fano manifold $X$, we have a decomposition
\[D^\pi Fu(X)=\bigoplus\limits_{\eta \in Crit(\PO)}D^\pi Fu_\eta(X)\]
and every torus fiber with a weak bounding cochain in $Crit(\PO)$ generates $D^\pi Fu_\eta(X)$.
\end{theorem}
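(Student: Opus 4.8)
The plan is to prove the statement in two independent steps: first produce the orthogonal decomposition, and then establish split-generation inside each block. Both steps rest on the fact that $QH^*(X)$ acts on $D^\pi Fu(X)$ through the closed-open map $\mathcal{CO}\colon QH^*(X)\to HH^*(Fu(X))$ into Hochschild cohomology, whose elements act on objects and morphisms as natural transformations of the identity functor. Under the closed-string isomorphism $\ks\colon QH^*(X)\cong Jac(\PO)$ this is the same as letting the Jacobian ring act on the category, and I would phrase the whole argument in terms of this action.

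For the decomposition I would first use that, $X$ being Fano, $\PO$ is a Laurent polynomial with isolated critical points, so $Jac(\PO)$ is a finite-dimensional $\Lambda$-algebra and therefore splits as a product of local Artinian rings $\bigoplus_{\eta\in Crit(\PO)} Jac(\PO;\eta)$, one factor per critical point. Transporting the primitive idempotents $e_\eta$ to $QH^*(X)$ via $\ks$, they act on $D^\pi Fu(X)$ as mutually orthogonal projectors summing to the identity; defining $D^\pi Fu_\eta(X)$ to be the image of $e_\eta$ then gives the required direct sum. As a consistency check one sees the orthogonality already on the chain level: for objects $(L_0,b_0),(L_1,b_1)$ whose superpotential values $\lambda_0,\lambda_1$ differ, the same computation used earlier to show $(m_1^{b_0,b_1})^2=0$ for objects of $\calC_\lambda$ now yields $(m_1^{b_0,b_1})^2=(\lambda_1-\lambda_0)\,\mathrm{id}$, so $m_1^{b_0,b_1}$ is invertible and the Floer complex between the two blocks is acyclic.

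For generation I would fix a critical point $\eta$ and the corresponding object $(L(\bu),\eta)$, where $b=\eta\in H^1(L(\bu);\Lambda_0)$ solves $\partial\PO/\partial x_i=0$; by the theorem of \cite{CO,FOOOtoric1} its self-Floer cohomology is $H^*(T^n;\Lambda)\neq 0$, so it lies in $D^\pi Fu_\eta(X)$. The key input is the split-generation criterion of Abouzaid and Ganatra: $(L(\bu),\eta)$ split-generates the block as soon as the length-zero closed-open map $\mathcal{CO}^0\colon QH^*(X)\to HF((L(\bu),\eta),(L(\bu),\eta))$ sends the idempotent $e_\eta$ to an invertible element. Because $\mathcal{CO}^0$ is unital, $\mathcal{CO}^0(1)$ is the unit $e_L$, and since $e_\eta$ acts as the identity on the $\eta$-block and annihilates the others, $\mathcal{CO}^0(e_\eta)=e_L$ is invertible; hence the criterion applies and $(L(\bu),\eta)$ generates $D^\pi Fu_\eta(X)$.

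The hard part will be verifying the hypotheses that make the split-generation criterion usable, namely the well-definedness and unitality of the open-closed and closed-open maps together with the geometric input that $\mathcal{OC}$ hits the fundamental class; this is where the monotone (Fano) hypothesis and the Hamiltonian torus action enter, and is precisely the content supplied by the equivariant Floer-theoretic arguments of \cite{EL} for monotone Hamiltonian $G$-manifolds and by \cite{AFOOO} in the general toric setting. A secondary subtlety is that the decomposition must be indexed by critical \emph{points} and not merely by critical values: when several critical points share a superpotential value, the naive $m_0$-value argument cannot separate them, and one genuinely needs the finer idempotents $e_\eta$ coming from the local splitting of $Jac(\PO)$.
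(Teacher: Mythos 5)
First, a point of order: the paper does not prove this theorem at all --- it is quoted verbatim from \cite{AFOOO,EL} as background input, so there is no ``paper's own proof'' to compare against. Measured against the actual strategy of those references, your outline is broadly the right shape: the decomposition via idempotents of $QH^*(X)\cong Jac(\PO)$ acting through $\mathcal{CO}$ on the idempotent-complete category $D^\pi Fu(X)$, and generation via an Abouzaid-type split-generation criterion, is indeed how \cite{EL} proceeds. Your chain-level consistency check is also correct: the same computation the paper uses for $(m_1^{b_0,b_1})^2=0$ in $\calC_\lambda$ does give $(m_1^{b_0,b_1})^2=(\lambda_1-\lambda_0)\cdot\id$ for distinct potential values, matching the matrix factorization identity $(m_1^{b_0,b})^2=\PO-\lambda$ in Section 5, and this acyclicity separates critical \emph{values}.

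There is, however, a genuine gap in the generation step: you have misstated the split-generation criterion, in a way that makes the key step falsely automatic. The criterion of Abouzaid and its compact monotone adaptations is phrased via the \emph{open-closed} map: $(L(\bu),\eta)$ split-generates $D^\pi Fu_\eta(X)$ if the image of $\mathcal{OC}$ on the Hochschild homology of the subcategory it generates contains an element acting invertibly on the summand $QH^*_\eta(X)$, e.g.\ the idempotent $e_\eta$ itself. Your version --- that it suffices for $\mathcal{CO}^0$ to send $e_\eta$ to an invertible element of $HF((L(\bu),\eta),(L(\bu),\eta))$ --- is satisfied tautologically by unitality for \emph{any} object of the block with nonvanishing Floer cohomology, so if it were a valid criterion, generation would be a formality; it is not, and verifying the $\mathcal{OC}$-statement is precisely the hard equivariant content of \cite{EL} that you defer to in your last paragraph. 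There is a legitimate $\mathcal{CO}$-based shortcut due to Sheridan (injectivity of the unital map $\mathcal{CO}$ out of a \emph{field} summand, combined with the duality between $\mathcal{OC}$ and $\mathcal{CO}$, yields the criterion), but for toric Fano manifolds one cannot assume each $Jac(\PO;\eta)$ is a field: quantum cohomology of a toric Fano need not be semisimple (Ostrover--Tyomkin), so degenerate critical points occur and this route does not cover the general case. A secondary gap: ``$HF\neq 0$, so it lies in $D^\pi Fu_\eta(X)$'' is a non sequitur --- nonvanishing only places the object in \emph{some} block; when several critical points share a critical value, placing $(L(\bu),\eta)$ in the block labelled $\eta$ requires showing the complementary idempotents $e_{\eta'}$ annihilate $HF((L(\bu),\eta),(L(\bu),\eta))$, i.e.\ that $\mathcal{CO}^0\circ\ks^{-1}$ factors through the local ring $Jac(\PO;\eta)$ --- the same finer information you correctly flag for the decomposition but overlook here.
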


\subsection{Closed-open maps}\label{subsec:comaps}
Let $\CM_{k+1,l}^{main}(\beta)$ be the moduli space of holomorphic discs of class $\beta$ with $k+1$ boundary marked points respecting the cyclic order and $l$ interior marked points. Define the evaluation map 
\[ ev=(ev_1^+,\cdots,ev_l^+,ev_0,\cdots,ev_k): \CM^{main}_{k+1,l}(\beta) \to X^l \times L^{k+1}\]
and consider the map \cite{FOOO,FOOOtoric2}:
\[ \fq_{l,k;\beta}: E_l(\mathcal{A}) \otimes H^*(L;\Lambda)^{\otimes k} \to H^*(L;\Lambda),\]
\begin{align*}
 &\fq_{l,k;\beta}([\alpha_1 \otimes \cdots \otimes \alpha_l],p_1,\cdots,p_k) \\
 :=&(ev_0)_*(ev_1^+\times \cdots \times ev_l^+ \times ev_0 \otimes \cdots \otimes ev_k)^*([\alpha_1 \OCO \alpha_l],p_1 \OCO p_l).\end{align*}
Here $E_l (\mathcal{A})$ means the subspace of $\mathcal{A}^{\otimes l}$, consisting of $S_l$-invariant elements and 
\[ [\alpha_1 \OCO \alpha_l]=\sum_{\sigma\in S_k}\frac{1}{l!}\alpha_{\sigma(1)} \OCO \alpha_{\sigma(l)}.\]
We similarly define 
\[ \fq_{l,k;\beta}([\alpha_1 \OCO \alpha_l], p_1, \cdots, p_k) \] for $p_1,\cdots,p_k$ being transverse intersections among distinct Lagrangians $L_0,\cdots,L_k$, using moduli spaces of holomorphic polygons with faces on $L_0,\cdots,L_k$ together with interior marked points. We also define (both for a single Lagrangian or a transversal collection of Lagrangians)
\[ \fq_{l,k}([\alpha_1\OCO \alpha_l],p_1,\cdots,p_k):=\sum_{\beta \in \pi_2(M,\vec{L})} \fq_{l,k;\beta}([\alpha_1\OCO \alpha_l],p_1,\cdots,p_k)\cdot T^{\omega(\beta)}.\]

\begin{theorem}[\cite{FOOOtoric2}]
Let $X$ be a compact toric manifold. Then for any $\mathfrak{b} \in \mathcal{A}(\Lambda_+)$ and $b \in H^1(L(\bu);\Lambda)$ the following always holds:
\[ \sum_{l=0}^\infty \sum_{k=0}^\infty \fq_{l,k}(\mathfrak{b}^l;b^k) \equiv 0 \; \mod \; \Lambda_+ \be.\]
We denote the left hand side by $\PO^\fb(b) \cdot \be$ and call the coefficient $\PO^\fb(b)$ a {\em bulk-deformed potential.}
\end{theorem}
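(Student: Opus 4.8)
The plan is to show that the torus fibre $L(\bu)$ remains weakly unobstructed after bulk deformation by $\mathfrak{b}$, which amounts to proving that $\sum_{l,k}\fq_{l,k}(\mathfrak{b}^l;b^k)$, a priori an element of $H^*(L(\bu);\Lambda)$, actually lies in the degree-zero part $\Lambda\cdot\be$. First I would exploit the toric symmetry: since $X$ is toric and $L(\bu)$ is a free $T^n$-orbit, every moduli space $\CM_{k+1,l}^{main}(\beta)$ carries a $T^n$-action for which the evaluation maps are equivariant, so one can arrange the Kuranishi structures and perturbations to be $T^n$-invariant (this is where the transversality results of \cite{FOOOtoric1} enter). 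Representing the ambient classes $\mathfrak{b}$ and the boundary class $b=\sum_i x_i\be_i$ by invariant forms, the output $\fq_{l,k;\beta}(\mathfrak{b}^l;b^k)=(ev_0)_*(\cdots)$ is then an invariant form on $L(\bu)\cong T^n$, hence a constant-coefficient element of $\bigwedge^\bullet\langle\be_1,\dots,\be_n\rangle$.

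The core step is a degree count. Using the classification of holomorphic discs bounded by a torus fibre \cite{CO}, every nonconstant disc class has Maslov index $\mu(\beta)\geq 2$, and the expected dimension of $\CM_{k+1,l}^{main}(\beta)$ together with the insertion degrees pins down the degree of the output: for divisor insertions and $b\in H^1$ the output degree is $2-\mu(\beta)$, independently of $l$ and $k$. Thus the index-$2$ discs contribute in degree $0$, i.e.\ as a multiple of $\be$, while higher-index classes contribute in negative degree and therefore vanish. I would then combine this with the $T^n$-invariance constraint to rule out the remaining positive-degree invariant monomials $\be_I$ that could in principle occur for higher-degree bulk insertions.

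Separately I would dispose of the constant-disc term $\beta=0$. The ambient classes of positive degree restrict to zero on the fibre $L(\bu)$, because the toric divisors miss the open orbit, which kills all mixed classical terms; the purely boundary classical contribution reduces to the cup product $b\wedge b$, which vanishes since $b$ is an odd-degree form and the invariant $1$-forms $\be_i$ anticommute, while higher classical products vanish in the canonical model. Finally, positivity of the resulting coefficient, $\PO^{\mathfrak{b}}(b)\in\Lambda_+$, follows because every nonconstant disc has $\omega(\beta)>0$ and $\mathfrak{b}$ has coefficients in $\Lambda_+$, so no $T^0$-term survives.

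The hard part will be the equivariant perturbation theory: one must construct $T^n$-invariant Kuranishi structures (or continuous families of perturbations) compatible with the fibre-product description of the moduli-space boundaries, so that the degree and invariance arguments are valid at the chain level and not merely in cohomology. A secondary difficulty, present when $X$ is not Fano, is that the bare degree count alone no longer forces the output into $\Lambda\cdot\be$ for arbitrary bulk classes, and one must supplement it with the $T^n$-invariance constraint to eliminate the higher invariant forms $\be_I$.
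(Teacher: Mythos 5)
The paper itself offers no proof of this statement --- it is quoted verbatim from \cite{FOOOtoric2} --- so the comparison must be with Fukaya--Oh--Ohta--Ono's argument, and your outline does follow its broad strategy ($T^n$-equivariant Kuranishi structures, invariant representatives, a degree count). But the step you are vaguest about is precisely the one that carries the whole theorem, and as stated it fails. You propose to ``rule out the remaining positive-degree invariant monomials $\be_I$'' by the $T^n$-invariance constraint, and you repeat in your last paragraph that invariance ``eliminates the higher invariant forms.'' It cannot: the $T^n$-invariant forms on $L(\bu)\cong T^n$ are exactly the constant-coefficient forms, which exist in every degree $0\le d\le n$, as your own first paragraph concedes when you note the output lies in $\bigwedge^\bullet\langle \be_1,\dots,\be_n\rangle$. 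Once you know the output is invariant of degree $2-\mu(\beta)+\sum_j(\deg \fb_j-2)$, invariance gives you nothing further in positive degree. The missing idea in \cite{FOOOtoric2} is a freeness/dimension argument: since $ev_0$ is $T^n$-equivariant and $T^n$ acts \emph{freely} on $L(\bu)$, the stabilizer of any point of the (invariantly perturbed) zero set injects into the trivial stabilizer of its image, so the induced action on the zero set is free; a free $T^n$-space has dimension at least $n$, hence any component whose virtual dimension is $<n$ can be perturbed invariantly to be empty. The components whose pushforward would produce an invariant form of positive degree are exactly those of virtual dimension $<n$, so they contribute zero --- not because invariant positive-degree forms vanish, but because the relevant moduli spaces do.

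The same gap infects your treatment of the non-Fano case and of Maslov index. The classification of \cite{CO} gives $\mu(\beta)\ge 2$ only for classes represented by honest holomorphic discs; for a general compact toric manifold the stable-map compactification contains configurations with sphere bubbles of negative Chern number, so classes with $\mu(\beta)\le 0$ genuinely enter the operator $\fq_{l,k;\beta}$, and your degree count then produces invariant forms of positive degree even with only divisor insertions. These too are killed by the freeness argument above, not by invariance per se. Also note that your claim that the output degree is $2-\mu(\beta)$ ``independently of $l$ and $k$'' holds only when every bulk insertion has degree exactly $2$; in general it is $2-\mu(\beta)+\sum_j(\deg\fb_j-2)$, which is why higher-degree bulk classes are the critical case. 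The remaining parts of your proposal are sound and agree with the standard argument: the classical ($\beta=0$) contributions vanish because ambient classes of positive degree restrict to zero on the fiber and $b\wedge b=0$ for $b=\sum_i x_i\be_i$ with commuting coefficients, and the coefficient lands in $\Lambda_+\be$ because nonconstant discs carry $T^{\omega(\beta)}$ with $\omega(\beta)>0$ while $\fb\in\CA(\Lambda_+)$. You correctly flag the chain-level equivariant transversality as the hard technical input, but without the free-action dimension lemma your proof does not close even granting it.
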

Now we define the following.
\begin{definition}
Let $X$ be a compact toric manifold and $\fb \in \CA(\Lambda_+)$. Let $b_0,\cdots,b_k$ be weak bounding cochains. Then the following operations define a new $\AI$-structure:
\[m_k^{\fb,\vec{b}}(x_1,\cdots,x_k):= \sum_{l=0}^\infty \fq_{l,*}(\fb^l;e^{b_0},x_1,e^{b_1},x_2,\cdots,e^{b_{k-1}},x_k,e^{b_k}).\]
\end{definition}
Let $\fb=t\alpha$ where $\alpha \in \mathcal{A}$ and $t$ is a formal variable, and $b=\sum x_i \be_i \in H^1(L(\bu),\Lambda_0)$ for some $\bu \in Int(P)$. Then $\PO^{\fb}(b)$ is a formal power series in $t$. We observe the following:
\[ \widetilde{\ks}(\alpha):= \frac{\partial \PO^{\fb}(b)}{\partial t}\big|_{t=0}=\sum_{k\geq 0}\fq_{1,k}(\alpha;b^k).\]
By above theorem, $\widetilde{\ks}(\alpha)$ is a multiple of the unit $\be$. Again, the coefficient is a power series in $x_1,\cdots,x_n$, or a Laurent polynomial in $y_1,\cdots,y_n$. We will later employ $\widetilde{\ks}$ for the statement of the closed string mirror symmetry for toric Fano manifolds.

We can also think of the $t$-derivative of $\AI$-equations of $\{m_k^{t\alpha,\vec{b}}\}$ at $t=0$ as follows(we omit the decoration $e^{b_i}$ for simplicity).
\begin{align}
 0=&\sum (-1)^{|x_1|'+\cdots+|x_i|'}m_{i+k-j+1}\big(x_1,\cdots,x_i,\fq(\alpha;x_{i+1},\cdots,x_j),x_{j+1},\cdots,x_k\big)\label{qinfty}\\
 & + \sum (-1)^{|x_1|'+\cdots+|x_i|'} \fq(\alpha;x_1,\cdots,x_i,m_{j-i}(x_{i+1},\cdots,x_j),x_{j+1},\cdots,x_k). \nonumber
\end{align}
This equation will be crucially used in Section \ref{sec:mainthm}. Remark that it is related to the degeneration of holomorphic discs with one interior constraint(see Figure \ref{fig:intdegeneration}).

\begin{figure}
\includegraphics[height=2.5in]{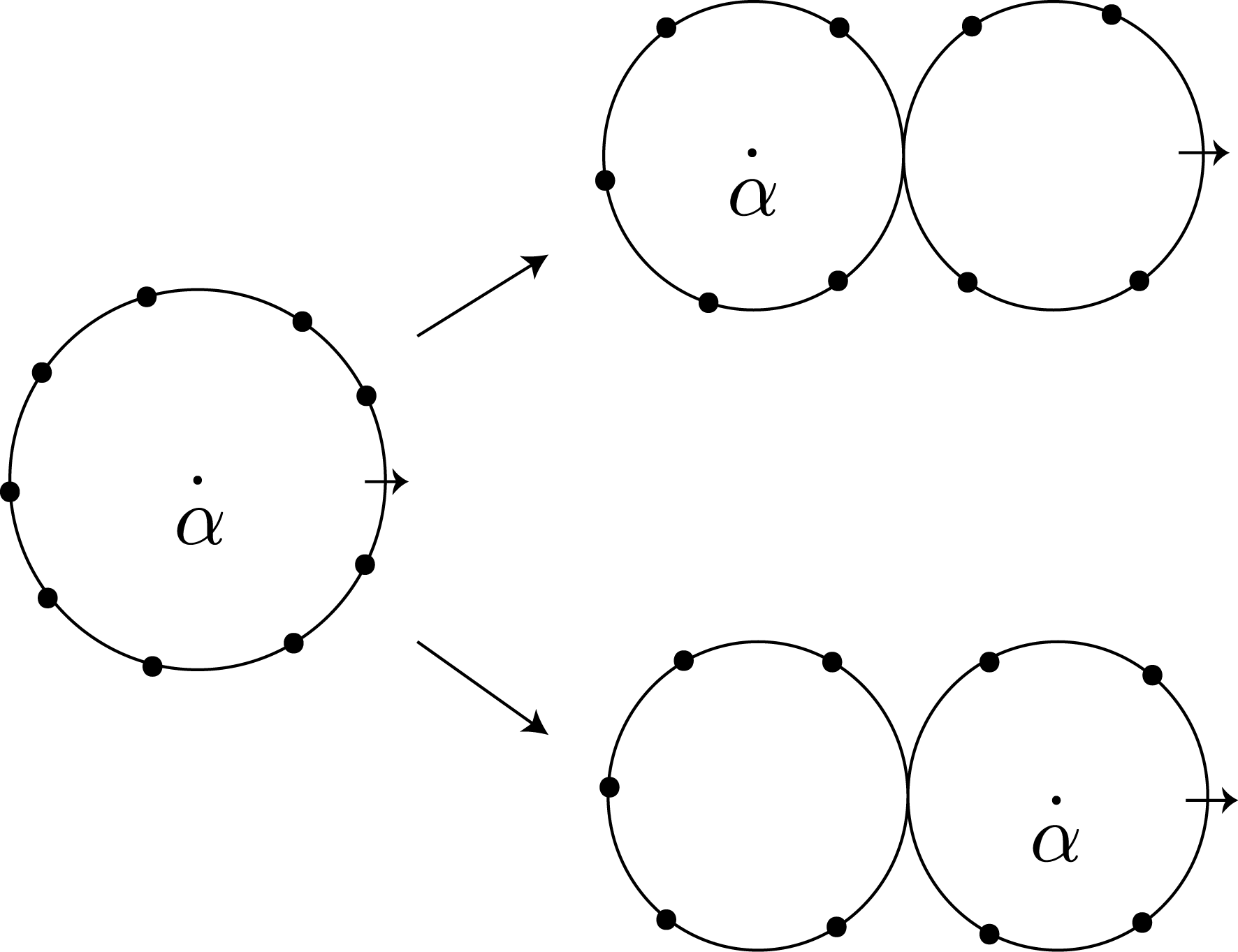}
\caption{A degeneration of holomorphic discs with interior constraint $\alpha$. In broken discs, components without interior constraints correspond to $m_k(\vec{p})$ and components with interior constraints correspond to $\fq_{1,k}(\alpha;\vec{p})$.}
\label{fig:intdegeneration}
\end{figure}

\section{Category of matrix factorizations}\label{sec:mfctg}
Let $R$ be a commutative algebra and $W\in R$ be a non-zero-divisor. A {\em matrix factorization} of $W$ is a $\ZZ/2$-graded $R$-module $E = E^0 \oplus E^1$
 with a degree 1 endomorphism 
 \[ Q= \left( {\begin{array}{cc} 0 & Q_{01} \\ Q_{10} & 0 \ \end{array} } \right), \; \textrm{where} \; Q_{ij} \in \Hom(E^j,E^i), \]
 satisfying $Q^2 = W \cdot Id$. We denote the above data by $(E,Q)$ for short.%For simplicity, we assume that 
%$W$ has only one critical value at $0$ in this section.   {\bf (Novikov coefficients?)}
%\begin{definition}\label{def:mf}
%Fix $\lambda \in \CC$.
% A {\em matrix factorization} of $W-\lambda$ is a $\ZZ/2$-graded trivial vector bundle $E = E^0 \oplus E^1$
% with an endomorphism 
% \[ Q= \left[ {\begin{array}{cc} 0 & Q_{01} \\ Q_{10} & 0 \ \end{array} } \right], \; \textrm{where} \; Q_{ij} \in \Hom(E^j,E^i), \]
% satisfying $Q^2 = (W-\lambda) \cdot Id$.  In other words, $Q_{01}Q_{10} = Q_{10}Q_{01} = (W-\lambda) \cdot Id$.
% \end{definition}

Matrix factorizations of $W$ form a differential $\ZZ/2$-graded category $MF(R,W)$ as follows: given two matrix factorizations $(E,Q), (F,Q')$, $\ZZ/2$-graded morphisms from $(E,Q)$ to $(F,Q')$ are
given by homomorphisms 
 \[ \Phi= \left( {\begin{array}{cc} \Phi_{00} & \Phi_{01} \\ \Phi_{10} & \Phi_{11} \ \end{array} } \right), \; \textrm{where} \; \Phi_{ij} \in \Hom(E^j,F^i). \]
Compositions of morphisms are defined in the obvious way.
The differential on a morphism is defined as
\[ \delta \Phi := Q \Phi  - (-1)^{|\Phi|} \Phi Q\]
for morphisms of homogeneous degrees.

 We often write as $MF(W)$ instead of $MF(R,W)$. The cohomology category $[MF(W)]:=H^0(MF(W))$ with respect to $\delta$ is a triangulated category.
The following theorem illustrates the geometric relevance of categories of matrix factorizations.
\begin{theorem}[\cite{Or}]
$[{MF}(W)] \simeq D_{sg}(Spec(R/W)).$
\end{theorem}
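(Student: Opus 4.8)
The plan is to realize the equivalence through the classical chain of results of Eisenbud and Buchweitz, in the form packaged by Orlov. Write $A := R/W$; assuming $R$ regular (as holds for the Landau--Ginzburg mirrors at hand), the element $W$ being a non-zero-divisor makes $A$ a Gorenstein hypersurface, so its category of maximal Cohen--Macaulay modules $\mathrm{MCM}(A)$ has a well-behaved stable category $\underline{\mathrm{MCM}}(A)$, and the singularity category $D_{sg}(\mathrm{Spec}(R/W)) = D^b(\mathrm{coh}\,A)/\mathrm{Perf}(A)$ is the Verdier quotient of the bounded derived category by the thick subcategory of perfect complexes. The first step is to build the comparison functor $\mathrm{cok}\colon [MF(W)] \to D_{sg}(A)$ sending a matrix factorization $(E,Q)$ to $\mathrm{coker}(Q_{01}\colon E^1 \to E^0)$, which is naturally an $A$-module because $Q_{10}Q_{01} = W\cdot\mathrm{Id}$ forces $W$ to annihilate the cokernel. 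One checks that a degree-$0$ closed morphism of matrix factorizations induces a module homomorphism on cokernels, and that null-homotopic morphisms induce homomorphisms factoring through a free module, so $\mathrm{cok}$ descends to the homotopy category and lands in stable homomorphisms.

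The second step is Eisenbud's matrix factorization theorem: over the hypersurface $A$, every MCM module $M$ without free summand admits a $2$-periodic free resolution, and lifting the two alternating differentials to $R$ produces exactly a matrix factorization $(E,Q)$ of $W$ with $\mathrm{coker}(Q_{01}) \cong M$. This shows $\mathrm{cok}$ is essentially surjective onto $\underline{\mathrm{MCM}}(A)$, and a careful bookkeeping of homotopies versus maps factoring through frees shows it is fully faithful: the $H^0$ of the $\mathbb{Z}/2$-graded morphism complex of $MF(W)$ computes precisely the stable groups $\underline{\mathrm{Hom}}_A(M,N)$. Matching the shift functors --- the swap $E^0 \leftrightarrow E^1$, $Q_{01}\leftrightarrow Q_{10}$ on one side against the (co)syzygy functor on the other, which is $2$-periodic in accordance with the $\mathbb{Z}/2$-grading --- and checking that mapping cones of matrix factorizations map to distinguished triangles of MCM modules upgrades this to a triangle equivalence $[MF(W)] \simeq \underline{\mathrm{MCM}}(A)$.

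The final step is Buchweitz's theorem that for a Gorenstein ring the tautological functor $\underline{\mathrm{MCM}}(A) \to D_{sg}(A)$ is a triangle equivalence. Surjectivity up to isomorphism holds because, for any bounded complex, a sufficiently high syzygy is MCM and agrees with the original complex modulo a perfect complex; full faithfulness holds because, for MCM modules $M,N$, morphisms in the Verdier quotient are computed by $\mathrm{Hom}_A(M,N)$ modulo maps factoring through projectives, i.e. again by stable homomorphisms. Composing with the previous equivalence yields $[MF(W)] \simeq D_{sg}(\mathrm{Spec}(R/W))$.

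I anticipate the main obstacle to be this last step --- Buchweitz's identification of $\underline{\mathrm{MCM}}(A)$ with the quotient $D_{sg}(A)$. Showing that every object of $D^b(\mathrm{coh}\,A)$ becomes, modulo perfect complexes, a shift of an MCM module requires the Gorenstein (finite injective dimension) hypothesis together with a truncation and syzygy argument, and verifying that the Hom groups in the quotient reduce to stable homomorphisms requires a calculus-of-fractions computation showing that the relevant roofs can always be represented by genuine module maps. By contrast, the Eisenbud correspondence and the construction of $\mathrm{cok}$ are essentially formal, so the analytic weight of the proof sits in reconciling the Verdier-quotient description of $D_{sg}$ with the concrete stable module category.
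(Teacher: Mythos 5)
The paper offers no proof of this statement at all: it is quoted as a black box from Orlov's paper \cite{Or}, so there is no internal argument to compare against, and the right benchmark is Orlov's own proof. Your proposal is the correct classical route and would stand as a complete proof in the setting the paper actually uses. It differs from Orlov's argument in organization: Orlov constructs the cokernel functor on (the affine case of) a smooth variety with a regular function $W$ and proves full faithfulness and essential surjectivity for it directly, in effect internalizing the comparison between the stable category and the Verdier quotient, whereas you factor the equivalence explicitly as $[MF(W)] \simeq \underline{\mathrm{MCM}}(A) \simeq D_{sg}(A)$, quoting Eisenbud for the first arrow and Buchweitz for the second. Your decomposition buys modularity and citability of each step; Orlov's direct approach avoids reliance on Buchweitz's (long unpublished) manuscript and is what generalizes beyond the affine case. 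Two points in your write-up deserve emphasis rather than correction. First, you rightly insert the hypothesis that $R$ be regular: the theorem as stated in the paper, with $R$ merely ``a commutative algebra'' and $W$ a non-zero-divisor, is false in that generality (a singular ambient $R$ contributes to $D_{sg}(\mathrm{Spec}(R/W))$ in ways $[MF(W)]$ cannot see), and the hypothesis does hold for the rings the paper uses, namely $\Lambda[x_1^{\pm},\dots,x_n^{\pm}]$ and the complete local rings $\hat{R_\eta}$ --- the latter being exactly where Eisenbud's local $2$-periodicity theorem applies verbatim. Second, one small gap worth closing: you should verify that $\mathrm{coker}(Q_{01})$ is in fact maximal Cohen--Macaulay, which follows from the injectivity of $Q_{01}$ (forced by $W$ being a non-zero-divisor, since $Q_{10}Q_{01}=W\cdot\mathrm{Id}$) together with a depth count over $A$, and that the functor is well defined on the full homotopy category rather than just on closed degree-zero morphisms. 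With those routine checks added, your sketch is a sound proof of the cited equivalence.
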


$D_{sg}(X)$ is defined for a variety $X$ by the quotient of $D^b(X)$ by $Perf(X)$ which is a thick subcategory consisting of complexes which are quasi-isomorphic to bounded complexes of locally free sheaves(in affine case, projective modules). If $X$ is smooth then $D_{sg}(X)$ is trivial, hence the category roughly measures how the variety is singular. 

We are particularly interested in the case $W \in R=\Lambda[x_1^\pm,\cdots,x_n^\pm]$ with isolated critical points. According to the above theorem of Orlov, $[MF(W-\lambda)]$ is nontrivial if and only if $\lambda$ is a critical value of $W$. For a critical point $\eta$ of $W$, let 
\[ \hat{R_\eta}:=\Lambda[[x_1-\eta_1,\cdots,x_n-\eta_n]].\] 
If there are finitely many critical points, we take a direct sum of categories over all critical points as 
\[\overline{MF}(W):=\bigoplus\limits_{\eta \in Crit(W)} MF(\hat{R_\eta},W-W(\eta)).\] The direct sum of categories means that there are no nontrivial morphisms between objects in different summands. %We remark that even for some distinct critical points $\eta$ and $\eta'$ with the same critical value $\lambda$, a matrix factorization $P$ of $W-\lambda$ become different when it is localized at $\eta$ and $\eta'$.

\section{Mirror symmetry}\label{sec:mirror}

\subsection{Closed string mirror symmetry}
Let $X$ be compact toric Fano. One of the simplest version of closed string mirror symmetry is the following:
\begin{conjecture}
There is a ring homomorphism
\[QH^*(X) \cong Jac(\PO)\]
where $W$ is the Landau-Ginzburg mirror.
\end{conjecture}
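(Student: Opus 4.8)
The plan is to realize the isomorphism as the \emph{Kodaira--Spencer map} $\ks$ induced by the bulk-deformed potential, following Fukaya--Oh--Ohta--Ono. Given an ambient class $\alpha \in QH^*(X) = H^*(X;\Lambda)$, I set $\fb = t\alpha$ and take the first variation
\[ \widetilde{\ks}(\alpha) = \frac{\partial \PO^{\fb}(b)}{\partial t}\Big|_{t=0} = \sum_{k \geq 0} \fq_{1,k}(\alpha;b^k), \]
which by the theorem on $\PO^{\fb}$ is a multiple of the unit $\be$; its coefficient is a Laurent polynomial in $y_1,\dots,y_n$. Declaring $\ks(\alpha)$ to be the class of this coefficient in $Jac(\PO)$ produces a $\Lambda$-linear map $\ks \colon QH^*(X) \to Jac(\PO)$. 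One must then verify it is well defined: the coefficient is independent of the cycle representing $\alpha$, and its reduction modulo the Jacobian ideal $(\partial_{x_1}\PO,\dots,\partial_{x_n}\PO)$ is the intended invariant. Both facts follow from the differentiated $\AI$-relation (\ref{qinfty}), which exhibits a change of representative, and any partial derivative $\partial_{x_i}\PO$, as an $m_1^{b,b}$-exact term.

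The main step, and the principal obstacle, is to show that $\ks$ is a \emph{ring} homomorphism, intertwining the quantum cup product with the commutative product of $Jac(\PO)$. For this I would study the second variation: with $\fb = s\alpha + t\beta$, I compute $\partial_s\partial_t \PO^{\fb}(b)|_{s=t=0}$ through the moduli space $\CM^{main}_{k+1,2}(\beta)$ of discs carrying two interior insertions, one labelled $\alpha$ and one labelled $\beta$. The codimension-one degenerations fall into two families: the stratum where the two interior points bubble onto a sphere component reconstructs the three-point genus-zero Gromov--Witten contribution, hence $\ks(\alpha * \beta)$; the stratum where the boundary splits factors through a torus fiber and produces the product $\ks(\alpha)\cdot\ks(\beta)$ in $Jac(\PO)$. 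The vanishing of the total boundary contribution identifies these two, giving multiplicativity. The delicate point is the moduli-theoretic bookkeeping --- controlling all boundary strata of $\CM^{main}_{k+1,2}(\beta)$ and checking, via the $\fq$-relations and the divisor axiom, that only these two survive with matching signs --- and this is exactly where the closed Gromov--Witten product is pinned to the $B$-side multiplication.

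Finally, granting multiplicativity, bijectivity follows from a rank count together with surjectivity. Since $X$ is Fano the higher terms $P_k$ of $\PO$ vanish, so $\PO$ is a Laurent polynomial; one checks that its critical points are nondegenerate, whence $\dim_\Lambda Jac(\PO) = \# Crit(\PO)$, and a Kushnirenko-type count identifies this number with the number of torus-fixed points of $X$, equal to $\chi(X) = \dim_\Lambda QH^*(X)$. For surjectivity, the toric prime-divisor classes map under $\ks$ to the monomial terms $z_j$ of $\PO$ modulo higher Novikov valuation; since the $z_j$ generate $Jac(\PO)$ as a $\Lambda$-algebra (its quantum Stanley--Reisner presentation) and $\ks$ is multiplicative, an induction on the valuation places every $z_j$, hence all of $Jac(\PO)$, in the image. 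Surjectivity plus equality of finite ranks yields that $\ks$ is the desired ring isomorphism, and this is the map $\ks$ appearing in the main theorem.
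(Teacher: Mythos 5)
Your proposal is, in strategy, a reconstruction of the Fukaya--Oh--Ohta--Ono proof --- and that is precisely the proof this paper does \emph{not} give. The paper resolves the conjecture by citation: the theorem stated immediately after it attributes your map $\alpha \mapsto [\widetilde{\ks}(\alpha)]$, with multiplicativity coming from the moduli-theoretic analysis you sketch, to \cite{FOOOtoric3}. The paper's own contribution (its main theorem) is a genuinely different, categorical route to the ring-isomorphism property of $\ks$: using the localized mirror functor $\locmir^\LL \colon Fu(X) \to \OL{\mfai}(\PO)$, which is an equivalence for toric Fano by \cite{CHL2} together with the generation result of \cite{EL}, one forms the Morita bimodule $\CM=(\locmir^\LL\otimes 1)^*\CB_\Delta$ and Sheridan's quasi-isomorphisms $L^1_\CM$, $R^1_\CM$ of \cite{She15} identifying both Hochschild cohomologies with $Hom_{\CA-\CB}(\CM,\CM)$; on the $B$-side the map $\gamma\colon Jac(\PO)\to HH^*(\OL{\mfai}(\PO))$ of \cite{Dyc} plays the role of $\CO$. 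The main theorem then verifies that the square commutes by exhibiting an explicit chain homotopy $\xi_\alpha$ built from $\fq(\alpha;a_1,\cdots,a_r,\bullet)$, using only the differentiated $\AI$-relation (\ref{qinfty}) and unitality --- no moduli spaces with \emph{two} interior insertions are ever needed. Since $\CO$, $\gamma$, $[L^1_\CM]$, $[R^1_\CM]$ are ring isomorphisms, so is $\ks$. What your approach buys: it is self-contained on the $A$-side and, as FOOO show, works for all compact toric manifolds, not only Fano. What the paper's approach buys: it sidesteps exactly the second-variation bookkeeping over $\CM^{main}_{k+1,2}(\beta)$ that you correctly identify as the principal obstacle, replacing it with first-variation data plus homological algebra, and it explains structurally \emph{why} $\ks$ is multiplicative --- it is conjugate to the closed-open ring map through mirror symmetry.

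One concrete slip in your final step: the claim that the critical points of $\PO$ are nondegenerate is false for general toric Fano manifolds --- there exist toric Fano examples (Ostrover--Tyomkin) with non-semisimple quantum cohomology, equivalently with degenerate critical points of the potential --- so $\dim_\Lambda Jac(\PO) = \#Crit(\PO)$ fails in general. The rank count survives if you replace the naive count of critical points by the total Milnor number: $\dim_\Lambda Jac(\PO)$ counts critical points with multiplicity, and the Kushnirenko-type Newton-polytope computation still returns $\chi(X)$. As written, though, that step is wrong, and your surjectivity argument via the classes of the toric divisors mapping to $z_j$ modulo higher valuation is the part that does the real work.
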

There were some related results due to \cite{Ba,Gi}. While these approaches are explicit and given by algebraic methods, the following Fukaya-Oh-Ohta-Ono's construction of the isomorphism is rather geometric. %In particular, Batyrev computed quantum cohomologies of toric Fano manifolds giving generators and relations which can be read off in terms of data of moment polytopes. Then he gave the isomorphism algebraically, comparing generators and relations on both sides.

\begin{theorem}[\cite{FOOOtoric3}]
For a compact toric manifold $X$(not necessarily Fano), the ring isomorphism above can be given by
\begin{align*} 
 \ks: QH^*(X)& \to Jac(\PO), \\
\alpha &\mapsto [\widetilde{\ks}(\alpha)]
\end{align*}
where $\PO$ is the leading order potential and
\[Jac(\PO)=\frac{\Lambda[y_1^\pm,\cdots,y_n^\pm]}{\Big(y_1\frac{\partial \PO}{\partial y_1},\cdots,y_n\frac{\partial \PO}{\partial y_n}\Big)}.\]
\end{theorem}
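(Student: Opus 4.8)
The plan is to read off $\widetilde{\ks}(\alpha)$ from the bulk-deformed potential of Section \ref{subsec:comaps} and then verify the three defining properties of a ring isomorphism: $\Lambda$-linearity, multiplicativity, and bijectivity. Recall that for $\fb=t\alpha$ the element $\widetilde{\ks}(\alpha)=\sum_{k\geq 0}\fq_{1,k}(\alpha;b^k)$ is a multiple of the unit $\be$, whose coefficient is a Laurent polynomial in $y_1,\cdots,y_n$ once we substitute $y_i=e^{x_i}$ with the appropriate power of $T$. Writing $[\widetilde{\ks}(\alpha)]$ for the class of this coefficient, $\widetilde{\ks}$ already lands in $\Lambda[y_1^\pm,\cdots,y_n^\pm]$, so composition with the projection to $Jac(\PO)$ gives a well-defined map $\ks$; linearity in $\alpha$ is immediate from the multilinearity of $\fq$. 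It remains to establish multiplicativity and bijectivity.

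For multiplicativity I would introduce two bulk parameters by taking $\fb=s\alpha+t\beta$ and extract the coefficient of $st$ in $\PO^{\fb}(b)$, which is governed by the two-interior-input operation $\sum_k\fq_{2,k}(\alpha,\beta;b^k)$. The analogue of the degeneration in Figure \ref{fig:intdegeneration}, now carrying two interior constraints $\alpha$ and $\beta$, produces three types of boundary strata. When the two interior marked points collide a sphere bubbles off; by the Gromov--Witten definition of the quantum product this sphere contributes $\widetilde{\ks}(\alpha\ast\beta)$, where $\ast$ denotes the quantum product on $QH^*(X)$. When the two points distribute onto two disc components joined along a boundary node the contribution is $\widetilde{\ks}(\alpha)\cdot\widetilde{\ks}(\beta)$. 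The remaining strata are those in which $m_1^b$ appears, exactly as in equation (\ref{qinfty}). The crucial point is that at a critical torus fibre the Floer differential $m_1^b$ on $H^*(L(\bu);\Lambda)$ is the Koszul/Clifford differential whose structure coefficients are the partial derivatives $\partial\PO/\partial x_i=y_i\,\partial\PO/\partial y_i$ (this is the mechanism behind the theorem of \cite{CO,FOOOtoric1} relating $HF((L(\bu),b),(L(\bu),b))$ to the vanishing of the $\partial\PO/\partial x_i$), so everything in the image of $m_1^b$ lies in the ideal $(y_1\partial_{y_1}\PO,\cdots,y_n\partial_{y_n}\PO)$. Reading the degeneration relation modulo this ideal yields $\widetilde{\ks}(\alpha\ast\beta)\equiv\widetilde{\ks}(\alpha)\cdot\widetilde{\ks}(\beta)$, i.e. $\ks(\alpha\ast\beta)=\ks(\alpha)\cdot\ks(\beta)$ in $Jac(\PO)$.

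For bijectivity I would prove surjectivity first and then match dimensions. Using the divisor axiom for $\fq_{1,k}$, one computes $\widetilde{\ks}$ on the toric divisor classes and finds that the Poincaré dual of the divisor over the $j$-th facet is sent to the monomial $z_j=T^{l_j(\bu)}(y_1)^{v_{j,1}}\cdots(y_n)^{v_{j,n}}$. Since the fan of $X$ is complete, the primitive vectors $\vec{v}_j$ positively span $\RR^n$ and generate $\ZZ^n$, so the monomials $z_j$ generate $\Lambda[y_1^\pm,\cdots,y_n^\pm]$ as a $\Lambda$-algebra and hence generate $Jac(\PO)$. As $H^*(X)$ is generated as a ring by its divisor classes, multiplicativity forces $\ks$ to be surjective. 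Injectivity then follows by a dimension count: $\dim_\Lambda QH^*(X)=\dim_\Lambda H^*(X;\Lambda)$ equals the Euler characteristic of $X$, which in turn equals the number of critical points of $\PO$ counted with multiplicity, i.e. $\dim_\Lambda Jac(\PO)$; a surjection between $\Lambda$-vector spaces of equal finite dimension is an isomorphism.

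The main obstacle is the geometric input behind multiplicativity: rigorously identifying the sphere-bubble stratum of the two-point moduli space with the Gromov--Witten definition of the quantum product, and simultaneously identifying the $m_1^b$-boundary strata with the Jacobian ideal. This rests on the full Kuranishi and transversality package for holomorphic discs on toric manifolds \cite{FOOOtoric1,FOOOtoric2} together with the Clifford-algebra description of $m_1^b$ at the torus fibre. The Fano hypothesis is what keeps the bookkeeping clean, since there $\PO$ coincides with its leading-order term $z_1+\cdots+z_m$; in the general toric case one must work with the leading-order (or, when necessary, bulk-deformed) potential and the degeneration analysis requires additional care, as in \cite{FOOOtoric3}.
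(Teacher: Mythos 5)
The first thing to say is that the paper contains no proof of this statement at all: it is quoted as a black box from \cite{FOOOtoric3}, so your proposal can only be measured against Fukaya--Oh--Ohta--Ono's own argument. At the level of architecture you have reconstructed that argument fairly well: multiplicativity is indeed extracted from the two-interior-insertion operators $\sum_k\fq_{2,k}(\alpha,\beta;b^k)$ by a degeneration analysis in which disc splitting yields $\widetilde{\ks}(\alpha)\cdot\widetilde{\ks}(\beta)$ and the Floer-differential strata yield Jacobian-ideal terms. Two refinements of your phrasing are needed even at this level. The mechanism putting boundary terms into the ideal is the identity $m_1^b(\be_i)=y_i\frac{\partial\PO}{\partial y_i}\,\be$, obtained by differentiating the weak Maurer--Cartan equation in $x_i$; it is the coefficient of the unit in such terms that lies in the ideal, not literally ``everything in the image of $m_1^b$.'' And $\ks(PD(D_j))=z_j$ holds on the nose only in the Fano case \cite{CO}; in general one gets $z_j$ plus higher-energy corrections, so surjectivity is not a one-line generation statement but an induction on the energy filtration (your algebraic observation that the $z_j$ generate $\Lambda[y_1^\pm,\cdots,y_n^\pm]$ is correct and is the base of that induction).

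Two steps would fail as written. (i) The sphere-bubble stratum: collision of two interior marked points is of real codimension \emph{two} in the moduli of discs, so it is not a boundary stratum of the relevant one-dimensional moduli space, and the three-type boundary decomposition you assert does not hold verbatim. To see the quantum product $\alpha\ast\beta$ in codimension one, one must cut the moduli down by an auxiliary one-parameter family (constraining the relative position of the two interior points) and verify compatibility of the Kuranishi structures on the disc and sphere moduli spaces; this, not a routine application of (\ref{qinfty}), is the technical heart of the ring-homomorphism property in \cite{FOOOtoric3}. (ii) Injectivity: the equality $\chi(X)=\dim_\Lambda Jac(\PO)$ counted with multiplicity is not an available input. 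The potential $\PO$ is a specific, non-generic Laurent series --- in the non-Fano case covered by the statement it has infinitely many terms and $Jac$ must be taken in a completed ring --- so Kushnirenko-type counts do not apply off the shelf; and since surjectivity already gives $\dim_\Lambda Jac(\PO)\leq\dim_\Lambda QH^*(X)$, the dimension equality you invoke is \emph{equivalent} to the injectivity you are trying to prove, so citing it is circular. Fukaya--Oh--Ohta--Ono obtain the needed control of the critical locus through their leading term equation analysis, which is a main result of \cite{FOOOtoric3} in its own right; even in the Fano case your dimension count requires first verifying nondegeneracy of the specific Hori--Vafa-type potential before a generic count can be used.
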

Recall the definition of $\widetilde{\ks}$ from Section \ref{subsec:comaps}: we counted holomorphic discs with interior constraints.

\subsection{Localized mirror functor}
%We recall the construction of localized mirror functor from \cite{CHL1},\cite{CHL2}.
%Roughly speaking, given a weakly unobstructed object in an $\AI$-category, the functor from
%this category to the matrix factorization category of the associated potential function is obtained by
%a version of obstructed Yoneda embedding. To set the notation and our convention, we recall the basic setup, but
%to match with the algebraic construction of Shklyarov, we will use a different convention from the above references.

%\begin{definition}
%A filtered $\AI$-category $\mathcal{C}$ consists of a collection of objects, and for each pair of objects $C_1,C_2$,
%morphism space $\mathcal{C}(C_1,C_2)$ is given by a graded torsion free filtered $\Lambda_0$-module, with
%a sequence of degree one operations
%$$ m_k : \mathcal{C}[1](C_0,C_1) \otimes \mathcal{C}[1](C_{k-1},C_k) \to \mathcal{C}[1](C_1,C_k)$$
%for $k=0,1,\ldots$. Here $m_k$ respects the filtration and satisfies the $\AI$-equations. Namely
%for $x_i \in \mathcal{C}[1](C_{i-1},C_{i})$, $i=1,\cdots,k$, 
%%$\epsilon_1 = \sum_{j=1}^{i-1}(|x_j|')$
%\begin{equation*}
%\sum_{k_1+k_2=k+1} \sum_{i=1}^{k_1} (-1)^{\sum_{j=1}^{i-1}(|x_j|')} m_{k_1}(x_1,\cdots,x_{i-1},m_{k_2}(x_i,\cdots,x_{i+k_2-1}),x_{i+k_2},\cdots,x_k)=0.
%\end{equation*}
%\end{definition}

Given a potential function $W \in R$, we modify the convention of the dg category $(MF(W),\delta,\circ)$ to obtain an $\AI$-category $(\mfai(W),m_1^{MF},m_2^{MF})$. Objects are still matrix factorizations of $W$, and morphism spaces are
\[\Hom_{\mfai(W)}((E,Q_E), (F,Q_F)): =  Hom_R(F,E).\]
$m_1^{MF}$ and $m_2^{MF}$ are defined as 
\[m_1^{MF} (\Phi) := \delta (\Phi) = Q_E \circ \Phi - (-1)^{|\Phi|}\Phi \circ Q_{F},\]
\[m_2^{MF}(\Phi,\Psi) := (-1)^{|\Phi|} \Phi \circ \Psi.\]
Then $m_1^{MF}$ and $m_2^{MF}$ satisfy $\AI$-relations (\ref{aieq}). We also define
\[ \overline{\mfai}(W):=\bigoplus\limits_{\eta \in Crit(W)} \mfai(\hat{R_\eta},W-W(\eta)).\]

Let $(\mathbb{L},b)$ be a weakly unobstructed Lagrangian where $b=x_1 X_1 + \cdots + x_n X_n$, and let $W=m_0^b$ be a function on the space of weak Maurer-Cartan solutions. For any other weakly unobstructed Lagrangian $L$ with potential value $\lambda$
(i.e. there exists a Maurer-Cartan element $b_0$ such that $m_0^{b_0} = \lambda$),
the $\AI$-equation gives the following matrix facatorization identity
\[(m_1^{b_0,b})^2 = W - \lambda.\]

\begin{theorem}[\cite{CHL1}]
We have an $\AI$-functor $ \locmir^\LL$ from Fukaya $\AI$-category to $\AI(MF(W))$.
On the level of objects, it sends a weakly unobstructed Lagrangian $L$ with Maurer-Cartan element $b_0$ to
the matrix factorization $(E,Q)$, which is given by 
$$E:=\Hom\big((L,b_0), (\mathbb{L},b)\big),\; Q:=-m_1^{b_0,b}$$
On the level of morphisms, 
 $\locmir^\LL_k$ is defined as
 \begin{equation}\label{lmfunctor}
\locmir^\LL_k (x_1,\cdots, x_k) (\bullet) := m_{k+1}(x_1,\cdots,x_k,\bullet).
 \end{equation}

 Then these data define an $\AI$-functor.
\end{theorem}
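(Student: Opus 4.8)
The plan is to deduce everything from the filtered $\AI$-relation (\ref{aieq}) for the $\vec{b}$-deformed operations, applied to strings that end in a test morphism $\bullet \in E = \Hom\big((L,b_0),(\LL,b)\big)$. Throughout I read the defining formula (\ref{lmfunctor}) as $\locmir^\LL_k(x_1,\ldots,x_k)(\bullet) = m_{k+1}^{b_0,\ldots,b_k,b}(x_1,\ldots,x_k,\bullet)$, i.e. the deformed operation that inserts the bounding cochain of each Lagrangian in its gap and the reference cochain $b$ after $\bullet$; these operations satisfy (\ref{aieq}) by the weak Maurer--Cartan equation. Since the target $\mfai(W)$ carries only $m_1^{MF}$ and $m_2^{MF}$, the right-hand side of the $\AI$-functor equation collapses to the two families $m_1^{MF}(\locmir_k(\vec{x}))$ and $\sum m_2^{MF}(\locmir_i(\cdots),\locmir_{k-i}(\cdots))$, so the whole identity should fall out of a single $\AI$-relation once the terms are sorted.

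First, at the object level, I would check that $(E,Q)$ with $Q=-m_1^{b_0,b}$ is a genuine matrix factorization. This is the $n=1$ instance of (\ref{aieq}): for the single input $\bullet$, equation (\ref{eq:m1}) reads $(m_1^{b_0,b})^2(\bullet)+m_2(m_0^{(L,b_0)},\bullet)+(-1)^{|\bullet|+1}m_2(\bullet,m_0^{(\LL,b)})=0$, and substituting $m_0^{(L,b_0)}=\lambda\,e_L$, $m_0^{(\LL,b)}=W\,e_\LL$ together with the unit axioms yields $Q^2=(W-\lambda)\,\id$. This places $(E,Q)$ in the summand $\mfai(\hat{R_\eta},W-W(\eta))$ with $W(\eta)=\lambda$, so on the subcategory of objects sharing a fixed potential value $\lambda$ the images are matrix factorizations of one and the same $W-\lambda$, as needed for $m_1^{MF}$ and $m_2^{MF}$ to be the honest operations of a single summand.

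The core is the morphism-level identity for $k\geq 1$, obtained by evaluating (\ref{aieq}) on the string $(x_1,\ldots,x_k,\bullet)$ and sorting terms by where the inner operation falls. (a) When the inner $m$ hits a consecutive block among $x_1,\ldots,x_k$ only, the outer operation still ends in $\bullet$ and equals $\locmir_{\ast}(\ldots,m(x_i,\ldots,x_j),\ldots)(\bullet)$, reproducing the left-hand side of the functor equation. (b) When the inner block ends in $\bullet$, the last argument of the outer operation is $\locmir_{\ast}(x_i,\ldots,x_k)(\bullet)$, so the term is a nested composition $\locmir_{\ast}(x_1,\ldots,x_{i-1})\bigl(\locmir_{\ast}(x_i,\ldots,x_k)(\bullet)\bigr)$; its two extreme cases (inner block equal to $\{\bullet\}$, and inner block equal to the whole string) assemble into $m_1^{MF}(\locmir_k(\vec{x}))$ through $Q=-m_1^{\ast,b}$, while the intermediate cases assemble into the $m_2^{MF}$ compositions via $m_2^{MF}(\Phi,\Psi)=(-1)^{|\Phi|}\Phi\circ\Psi$ and the contravariant convention $\Hom_{\mfai}\big((E,Q_E),(F,Q_F)\big)=\Hom_R(F,E)$. (c) The curvature insertions $m_0^{\vec{b}}=\PO\,e$ at any gap force the outer operation to have $k+2\geq 3$ inputs one of which is a unit, and hence vanish by the unit axiom; this is exactly why, unlike the $k=0$ case, no residual $\lambda$- or $W$-multiples survive for $k\geq 1$. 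Rearranging families (a) and (b) into the two sides then gives the functor equation.

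The main obstacle will be the sign bookkeeping: one must verify that the Koszul signs $(-1)^{\epsilon}$ in (\ref{aieq}), the prefactors $(-1)^{|x_1|'+\cdots+|x_{i-1}|'}$ in the functor equation, and the signs built into the definitions of $m_1^{MF}$ and $m_2^{MF}$ all agree under the identifications of family (b). A secondary but essential point is to justify the curvature vanishing carefully — confirming that every $q=0$ insertion genuinely produces an $m_{\geq 3}$ with an adjacent unit — and to keep the composition order of matrix-factorization morphisms consistent with the contravariant $\Hom_R(F,E)$ convention, since a sign or ordering slip there would be invisible at the level of the underlying maps but would break the $\AI$-relation.
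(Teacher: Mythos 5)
Your proposal is correct and follows essentially the same route as the paper: both deduce the functor equation by evaluating the (deformed) $\AI$-relation on the string $(x_1,\ldots,x_k,\bullet)$, identifying inner blocks among the $x_i$ with the left-hand side, the two extreme $\bullet$-ending blocks with $m_1^{MF}(\locmir^\LL_k(\vec{x}))$ via $Q=-m_1^{b_0,b}$, and the intermediate $\bullet$-ending blocks with the $m_2^{MF}$ compositions under the contravariant $\Hom_R(F,E)$ convention. You are in fact somewhat more careful than the paper's terse proof, since you make explicit the curvature-insertion vanishing by the unit axiom for $k\geq 1$ and the object-level identity $Q^2=(W-\lambda)\,\id$, both of which the paper handles implicitly or in the discussion preceding the theorem.
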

\begin{remark}
In \cite{CHL1,CHL2}, they considered $CF\big((\mathbb{L},b), (L,b_0)\big)$ instead. In the current convention,
we do not have any sign in \eqref{lmfunctor}.
\end{remark}
\begin{proof}
To avoid confusion, let $\{m_k^{Fu}\}$ be $\AI$-operations of the Fukaya category, while $m_1^{MF}$ and $m_2^{MF}$ are $\AI$-operations of $\mfai(W)$. We need to show that
\begin{align}
& \locmir^\LL( \hat{m}^{Fu}(x_1,\cdots,x_k)) \nonumber \\ 
=& \sum_{\vec{x}_1\otimes \vec{x}_2=x_1\otimes\cdots\otimes x_k} m_2^{MF}( \locmir^\LL(\vec{x}_1), \locmir^\LL(\vec{x}_{2}))  
+ m_1^{MF} (\locmir^\LL(x_1,\cdots,x_k)). \label{lmai} 
\end{align}
The left hand side can be written as
\[\sum_{\vec{x}_1\otimes\vec{x}_2\otimes\vec{x}_3=x_1\otimes\cdots\otimes x_k}(-1)^{|\vec{x}_1|'} m^{Fu}\big(\vec{x}_1 \otimes m^{Fu}(\vec{x}_{2}) \otimes \vec{x}_3,\bullet\big).\]
By definition of $m_2^{MF}$,
\begin{align*} 
&\sum_{\vec{x}_1\otimes \vec{x}_2=x_1\otimes\cdots\otimes x_k} m_2^{MF}( \locmir^\LL(\vec{x}_1), \locmir^\LL(\vec{x}_{2}))\\
=&\sum_{\vec{x}_1\otimes \vec{x}_2=x_1\otimes\cdots\otimes x_k}(-1)^{|\vec{x}_1|' +1 } \locmir^\LL(\vec{x}_1) \circ  \locmir^\LL(\vec{x}_2) \\
=&\sum_{\vec{x}_1\otimes \vec{x}_2=x_1\otimes\cdots\otimes x_k} 
(-1)^{|\vec{x}|' +1 } m^{Fu}(  \vec{x}_1, m^{Fu}( \vec{x}_2, \bullet)).
\end{align*}
The last term equals to
\[- m_1^{Fu} m_{k+1}^{Fu}(x_1,\cdots,x_k,\bullet) - (-1)^{|x_1|'+\cdots+|x_k|'+1} m_{k+1}^{Fu} (x_1,\cdots,x_k,-m_1^{Fu}(\bullet)).\]
%due to the fact that the map $m_{k+1}(x_1,\cdots,x_k,\bullet)$ has degree $|x_1|'+\cdots+|x_k|'+1$.

Summarizing, the equation (\ref{lmai}) is nothing but an $\AI$-relation of the Fukaya category, and we are done.
\end{proof}

For compact toric Fano manifolds, the above construction indeed gives homological mirror symmetry.

\begin{theorem}[\cite{CHL2}]
For any compact toric Fano manifold $X$, the localized mirror functor 
\[\locmir^\LL: Fu(X) \to \overline{\mfai}(\PO)\]
is an $\AI$-equivalence for the monotone fiber $\LL$ with $b=x_1 e_1 + \cdots+ x_n e_n$, where $\{e_i\}$ is a basis of $H^1(\LL,\Lambda)$ and $x_i$ are formal variables.
\end{theorem}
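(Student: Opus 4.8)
The plan is to promote the $\AI$-functor $\locmir^\LL$, whose existence was established above, to an equivalence by combining the orthogonal decompositions available on the two sides with a single local full-faithfulness computation at each critical point. First I would check that $\locmir^\LL$ respects these decompositions. A pair $(L,b_0)$ with potential value $\PO(L,b_0)=\lambda$ is sent to a matrix factorization of $\PO-\lambda$, so after completion it can only be nontrivial in the summands $\mfai(\hat{R_\eta},\PO-\PO(\eta))$ with $\PO(\eta)=\lambda$. For the distinguished generators, namely the monotone fibre $\LL$ equipped with the critical bounding cochain $b_\eta=\eta$, the vanishing of the cross Floer cohomologies $HF((\LL,b_\eta),(\LL,b_{\eta'}))$ for $\eta\neq\eta'$ forces the image to be contractible over every $\hat{R_{\eta'}}$ with $\eta'\neq\eta$; hence $\locmir^\LL$ carries $Fu_\eta(X)$ into the single summand $\mfai(\hat{R_\eta},\PO-\PO(\eta))$. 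It then suffices to prove that $\locmir^\LL$ restricts to a quasi-equivalence on each summand.

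For this I would invoke the standard criterion that an $\AI$-functor which is cohomologically full and faithful on a split-generator $G$, and which sends $G$ to a split-generator of the target, induces an equivalence of split-closed derived categories. The generation hypothesis on the $A$-side is exactly the cited theorem of \cite{AFOOO,EL}: the fibre $(\LL,b_\eta)$ split-generates $D^\pi Fu_\eta(X)$. On the $B$-side, since $X$ is Fano the superpotential is the Laurent polynomial $\PO=z_1+\cdots+z_m$ with isolated critical points, so $\PO-\PO(\eta)$ has an isolated singularity; by Orlov's theorem \cite{Or}, $[\mfai(\hat{R_\eta},\PO-\PO(\eta))]\simeq D_{sg}$ is split-generated by the stabilization of the residue field, and one verifies that $\locmir^\LL(\LL,b_\eta)$ is quasi-isomorphic to this generating object.

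The main obstacle is the full-faithfulness statement, namely that the linear term $\locmir^\LL_1$ induces an isomorphism
\[ HF((\LL,b_\eta),(\LL,b_\eta)) \;\xrightarrow{\;\sim\;}\; H^*\big(\Hom_{\mfai}(\locmir^\LL(\LL,b_\eta),\locmir^\LL(\LL,b_\eta))\big). \]
Here the induced map on cohomology is computed by $[\locmir^\LL_1]$ alone, since $\locmir^\LL_1$ is a chain map once the bounding-cochain deformation renders $m_1^2=0$. On the source, the critical-point theorem of \cite{CO,FOOOtoric1} identifies $HF((\LL,b_\eta),(\LL,b_\eta))$ with the exterior algebra $H^*(T^n;\Lambda)$, of total rank $2^n$. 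On the target, the self-morphism cohomology of the image matrix factorization is computed directly from the differential $Q=-m_1^{b_\eta,b}$ and turns out to have the same total rank $2^n$. The real content is then to evaluate the explicit operator $x\mapsto m_2^{b_\eta,b_\eta,b}(x,\,\cdot\,)$ on the degree-one Floer generators $e_1,\dots,e_n$ and to check that their images are multiplicative generators of the matrix-factorization endomorphism algebra; since $[\locmir^\LL_1]$ is a ring homomorphism sending generators to generators between two algebras of equal rank, it is an isomorphism. The genuinely delicate points are the book-keeping of the $\ZZ/2$-graded Koszul signs dictated by the $\AI$-conventions, and the fact that the products $m_2^{b_\eta,b_\eta,b}$ must be evaluated with the correct bounding-cochain deformations, which is where the disc-counting input actually enters.

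Finally I would assemble the local statements: since $\locmir^\LL$ restricts to a quasi-equivalence $D^\pi Fu_\eta(X)\simeq[\mfai(\hat{R_\eta},\PO-\PO(\eta))]$ for every $\eta\in Crit(\PO)$, and both $D^\pi Fu(X)$ and $\overline{\mfai}(\PO)$ decompose as the orthogonal direct sum of these summands, the functor $\locmir^\LL\colon Fu(X)\to\overline{\mfai}(\PO)$ is an $\AI$-equivalence.
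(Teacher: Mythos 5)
You should first note that the paper does not actually prove this theorem: it is imported wholesale from \cite{CHL2}, and the only trace of an argument here is the sketch in the introduction (image objects of the strongly balanced fibers generate the matrix factorization category; the Evans--Lekili result \cite{EL} gives generation of the Fukaya category). Your skeleton --- decompose both sides by critical points, use \cite{AFOOO,EL} for split-generation on the $A$-side and generation by the stabilized residue field on the $B$-side, then reduce to full-faithfulness on the single generator --- is exactly that strategy, so in outline you are reconstructing the cited proof rather than proposing a new one.

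However, as a proof your proposal has a genuine gap precisely at its pivot: the claims that $\locmir^\LL(\LL,b_\eta)$ is quasi-isomorphic to the stabilized residue field, and that $[\locmir^\LL_1]$ carries $e_1,\dots,e_n$ to multiplicative generators of $H^*\bigl(\Hom_{\mfai}(\locmir^\LL(\LL,b_\eta),\locmir^\LL(\LL,b_\eta))\bigr)$, are asserted but never verified, and this is where the entire nontrivial content of \cite{CHL2} lives (an explicit disc-counting computation identifying the image with a Koszul-type matrix factorization and the induced map on degree-one classes). The surrounding logic (a ring map between algebras of equal rank $2^n$ sending generators to generators is an isomorphism) is fine, but note two points of care. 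First, the $B$-side generation statement you attribute to Orlov \cite{Or} is really Dyckerhoff's theorem \cite{Dyc} (Orlov's result only identifies $[MF]$ with $D_{sg}$); the paper cites \cite{Dyc} for exactly this. Second, for toric Fano manifolds $QH^*(X)$ need not be semisimple, so critical points of $\PO$ may be degenerate; your rank count still gives $2^n$ on both sides (the endomorphism algebra of the stabilization is a possibly degenerate Clifford algebra, still generated by its odd part), but the ``generators to generators'' step then requires injectivity of $[\locmir^\LL_1]$ on $H^1(\LL;\Lambda)$, which again only the explicit computation supplies. So: right architecture, matching the cited route, but the decisive lemma is named rather than proved.
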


\section{Main result}\label{sec:mainthm}
Recall the closed-open map $\CO: QH^*(X) \to HH^*(Fu(X)).$ We give a mirror counterpart $\gamma: Jac(\PO) \to HH^*(\overline{\mfai}(\PO))$ of the map $\mathcal{CO}$ by the following lemma. Recall that the notation $\OL{\mfai}(\PO)$ means a direct sum of categories
\[\OL{\mfai}(\PO)=\bigoplus\limits_{\eta\in Crit(\PO)}\mfai\big(\hat{R_\eta},\PO-\PO(\eta)\big).\]
 Also recall that if $\PO$ has isolated critical points we have a decomposition of $Jac(\PO)$ into local rings
\[ Jac(\PO)=\bigoplus\limits_{\eta \in Crit(\PO)} Jac(\PO;\eta).\]
For simplicity let $\CA:=Fu(X)$, $\CB:=\OL{\mfai}(\PO)$ and $\CB_\eta:=\mfai\big(\hat{R_\eta},\PO-\PO(\eta)\big)$.

\begin{lemma}
The following map is a well-defined ring isomorphism:
\[ \gamma: Jac(\PO) \to \bigoplus\limits_{\eta\in Crit(\PO)}HH^*(\CB_\eta),\;  
[r] \mapsto \sum_{\eta\in Crit(\PO)}[\phi_{r_\eta}]\]
where 
\[ [r]=\sum_{\eta\in Crit(\PO)} [r_\eta] \] for $[r_\eta] \in Jac(\PO;\eta)$, and
\[ \phi_{r_\eta}=\bigoplus\limits_{X \in Ob(\CB_\eta)}r_\eta\cdot \id_X \in CH^*(\CB_\eta)\]
which is a Hochschild cocycle with length zero part only.
\end{lemma}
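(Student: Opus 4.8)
The plan is to establish, in order, that $\phi_{r_\eta}$ is a Hochschild cocycle, that $\gamma$ is multiplicative already at the cochain level, that it descends to $Jac(\PO;\eta)$, and finally that it is bijective. Throughout I use that $\CB=\bigoplus_\eta\CB_\eta$ is a direct sum of categories, so that $HH^*(\CB)=\bigoplus_\eta HH^*(\CB_\eta)$ and, since the source decomposition $Jac(\PO)=\bigoplus_\eta Jac(\PO;\eta)$ is given, the whole statement may be checked one critical point at a time with $\gamma$ diagonal with respect to these splittings.

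\emph{Cocycle.} Evaluating $b^*\phi_{r_\eta}$ on a single morphism $x\in\CB_\eta(X,Y)$, the length-zero part vanishes because $\mfai$ has $m_0=0$, and what remains are the two length-one terms $m_2^{MF}(r_\eta\id_X,x)$ and $m_2^{MF}(x,r_\eta\id_Y)$. Since $r_\eta$ lies in the commutative ring $\hat{R_\eta}$ it is central, so $r_\eta\id$ commutes with $x$; a short sign check using $m_2^{MF}(\Phi,\Psi)=(-1)^{|\Phi|}\Phi\circ\Psi$ shows the two terms cancel, giving $b^*\phi_{r_\eta}=0$. For the ring homomorphism property I would compute the Yoneda product $[\phi_r]\cup[\phi_{r'}]=(-1)^{|\phi_r|}M^2(\phi_r,\phi_{r'})$ directly. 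Both cochains have length zero and take values in multiples of the unit, so in the formula for $M^2$ every term in which an external input is fed into some $m_*$ alongside $\phi_r()$ or $\phi_{r'}()$ is annihilated by the unit axiom $m_{k+1}(\dots,e,\dots)=0$ for $k\neq1$. Only the length-zero term survives, equal to $m_2^{MF}(r\id,r'\id)=rr'\,\id$, so that $\phi_r\cup\phi_{r'}=\phi_{rr'}$ on the nose; together with $\phi_1=\bigoplus_X\id_X$ this exhibits $\gamma$ as a unital ring map at the cochain level.

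\emph{Descent.} To see that $\gamma$ factors through $Jac(\PO;\eta)$ I must show $[\phi_{r_\eta}]=0$ whenever $r_\eta$ lies in the Jacobian ideal. By the cochain-level multiplicativity just proved it suffices to treat a single generator $y_j\frac{\partial\PO}{\partial y_j}$. For this I would differentiate the defining relation $Q^2=(\PO-\PO(\eta))\,\id$ by $y_j\frac{\partial}{\partial y_j}$, obtaining $y_j\big(\frac{\partial Q}{\partial y_j}Q+Q\frac{\partial Q}{\partial y_j}\big)=y_j\frac{\partial\PO}{\partial y_j}\,\id$, i.e. $y_j\frac{\partial\PO}{\partial y_j}\cdot\id_X=m_1^{MF}\big(y_j\frac{\partial Q_X}{\partial y_j}\big)$. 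Thus multiplication by $y_j\frac{\partial\PO}{\partial y_j}$ is null-homotopic on every object, so $\phi_{y_j\partial\PO/\partial y_j}$ is the length-zero part of $b^*\xi_j$ for the length-zero odd cochain $\xi_j:=(y_j\frac{\partial Q_X}{\partial y_j})_X$. The one delicate point is the leftover length-one part of $b^*\xi_j$: one concludes $[\phi_{y_j\partial\PO/\partial y_j}]=0$ either by observing that the induced map to the graded center of $[\CB_\eta]$ kills it, since multiplication by $\frac{\partial\PO}{\partial y_j}$ is already zero there, or by promoting $\xi_j$ to a genuine cochain null-homotopy on the bar complex.

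\emph{Bijectivity and the main obstacle.} Since $\PO$ has isolated critical points, each $\CB_\eta=\mfai(\hat{R_\eta},\PO-\PO(\eta))$ is the $\AI$-category of matrix factorizations of a function with a single isolated singularity, whose Hochschild cohomology is known to be $Jac(\PO;\eta)$ with the isomorphism realized precisely by ``multiplication by $r$''. Matching this with $\gamma$ on the central length-zero cocycles identifies the two maps on each summand, and the direct sum over $\eta\in Crit(\PO)$ finishes the argument. I expect the genuine obstacle to be exactly this bijectivity together with the length-one reconciliation above: both are manifestations of the matrix-factorization Hochschild--Jacobian correspondence, so a fully self-contained treatment would require writing out a Koszul-type resolution computing $HH^*(\CB_\eta)$, whereas the efficient route is to invoke that computation and verify only that $\gamma$ induces it.
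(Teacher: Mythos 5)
Your verifications that $\phi_{r_\eta}$ is a cocycle and that $\phi_r\cup\phi_{r'}=\phi_{rr'}$ already at the cochain level are correct (and in $\mfai$ the survival of only the length-zero term in $M^2$ is automatic, since $m_k^{MF}=0$ for $k\geq 3$); these checks are more explicit than anything the paper writes down. The genuine gap is the descent step, which you flag but do not close, and neither of your two proposed repairs works. Differentiating $Q^2=(\PO-\PO(\eta))\cdot\id$ produces only an \emph{objectwise} null-homotopy $w\cdot\id_X=m_1^{MF}\big(y_j\,\partial Q_X/\partial y_j\big)$ for $w=y_j\,\partial\PO/\partial y_j$; writing $\xi_j$ for this length-zero odd cochain, one has $b^*\xi_j=\phi_w+c_j$ with $c_j$ the length-one commutator cochain $x\mapsto\pm\big[y_j\,\partial Q/\partial y_j,\,x\big]$, so $[\phi_w]=-[c_j]$ and showing that $c_j$ is exact is \emph{equivalent} to the claim: ``promoting $\xi_j$ to a genuine null-homotopy'' is the assertion to be proved, not a proof. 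The graded-center alternative is actually false: the map $HH^*(\CB_\eta)\to\prod_X H^*\big(\CB_\eta(X,X)\big)$ is not injective for matrix factorizations. Already for $W=x^3$ and $E=(x,x^2)$, i.e.\ $Q=\left(\begin{smallmatrix}0&x\\x^2&0\end{smallmatrix}\right)$, the odd morphism $\Phi$ with $\Phi_{10}=1$, $\Phi_{01}=0$ satisfies $m_1^{MF}(\Phi)=Q\Phi+\Phi Q=x\cdot\id_E$, and likewise on the other indecomposables, so multiplication by $x$ is null-homotopic on every object; yet $[\phi_x]\neq 0$ in $HH^*\cong Jac(x^3)=\Lambda[[x]]/(3x^2)$ by the very isomorphism in question (or by Dyckerhoff's computation). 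Hence objectwise null-homotopies --- which is all your differentiation argument supplies --- cannot kill a Hochschild class; the vanishing of $[\phi_w]$ for $w$ in the Jacobian ideal requires homotopies coherent along morphisms, i.e.\ exactly the Koszul-type computation you defer.

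The paper avoids this issue entirely: it imports well-definedness, bijectivity and multiplicativity wholesale from Dyckerhoff's theorem, which presents the isomorphism as $[r]\mapsto(r\cdot:\calC_\Delta\to\calC_\Delta)$ in $\RR\Hom(\calC_\Delta,\calC_\Delta)$, and its only computation is the step your final paragraph leaves vague: lifting the bimodule map $r\cdot$ through the bar resolution $B(\calC_\Delta)$ and using the adjunction identifying bimodule maps out of the bar terms with Hochschild cochains, which exhibits the resulting class as precisely the length-zero cocycle $\phi_r$. So the workable part of your plan (``invoke the known computation and verify that $\gamma$ induces it'') coincides with the paper's route, but the verification must be this concrete bar-resolution lift rather than a matching of ``central length-zero cocycles'' --- which, by the counterexample above, does not determine the class --- and the self-contained descent argument should be deleted or replaced by that lift, since as written it is circular at the decisive point.
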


\begin{proof}
Let $\mathcal{C}=\mfai(\hat{R},\PO)$, where $\hat{R}$ is the completion of $R$ at the ideal $\mathfrak{m}$ which supports a critical point of $\PO$.
Recall from \cite{Dyc} that the isomorphism $\gamma:Jac(\PO) \to HH^*(\calC)$ is induced by the map
\[ Jac(\PO) \to \RR Hom_{\calC-\calC}(\calC_\Delta,\calC_\Delta),\; [r] \mapsto (\calC_\Delta \stackrel{r\cdot}{\longrightarrow} \calC_\Delta).\]
Replacing with the bar resolution $B(\calC_\Delta)$, the map $r\cdot: \calC_\Delta \to \calC_\Delta$ lifts to the map of complexes of $\calC$-bimodules
\[\xymatrix{
\cdots \ar[r] & \bigoplus\limits_{B_0,B_1 \in \calC} \calC(-,B_0) \otimes \calC(B_0,B_1) \otimes \calC(B_1,-) \ar[r] \ar[d]& \bigoplus\limits_{B \in \calC} \calC(-,B) \otimes \calC(B,-) \ar[r] \ar[d]^{r\cdot m_2^\calC} & 0 \\
0 \ar[r] & 0\ar[r] & \calC_\Delta\ar[r] & 0 }\]
and by the following identification
\begin{align*}
 hom_{\calC-\calC}\big(\calC(-,B_0) \otimes \calC(B_0,B_1) \OCO \calC(B_{p-1},B_p) \otimes \calC(B_p,-),X\big) \\ \simeq hom_k \big(\calC(B_0,B_1) \otimes \cdots \otimes \calC(B_{p-1},B_p),X(B_0,B_p)\big)
 \end{align*}
the above map of complexes changes to
\[\xymatrix{
\cdots \ar[r] & \bigoplus\limits_{B_0,B_1 \in \calC} \calC(B_0,B_1) \ar[r] \ar[d] & \bigoplus\limits_{B \in \calC}k \ar[r] \ar[d]^-{r} & 0
\\
0 \ar[r] & 0\ar[r] & \bigoplus_{B \in \calC} \calC(B,B) \ar[r] & 0 } \]
where the map $r$ sends $1$ to $r\cdot \id_B$ for each $B$.
\end{proof}

We justify that the map $\gamma$ is indeed the closed-open map on the $B$-model as follows: given the open-closed map $\mathcal{OC}: HH_*(Fu(X)) \to QH^*(X)$ on the $A$-model, let $\sigma \in HH_*(Fu(X))$ be the preimage of the unit $1 \in QH^*(X)$. For $\psi=\UL{a_0}\otimes a_1\otimes \cdots \otimes a_n \in HH_*(\calC)$ with $\calC$ being an $\AI$-category, recall the {\em cap product}
\[ - \cap \psi: HH^*(\calC) \to HH_*(\calC),\]
\[ \phi \cap \psi:= \sum (-1)^\star m_*^\calC\big(a_{l+1},\cdots,a_i,\phi(a_{i+1},\cdots,a_j)\otimes a_{j+1}\otimes \cdots \otimes a_n\otimes \UL{a_0}\otimes\cdots \otimes a_k\big)\otimes a_{k+1}\otimes \cdots \otimes a_l\]
where
\begin{align*}
 \star=&\; |\phi|'(|a_0|'+|a_1|'+\cdots+|a_i|')\\
 &+(|a_{l+1}|'+\cdots+|\phi(a_{i+1},\cdots,a_j)|'+\cdots+|a_n|')(|a_0|'+|a_1|'+\cdots+|a_l|').\end{align*}
Via the cap product, $HH_*(\calC)$ is equipped with a module structure over $HH^*(\calC)$. Then we have the following important fact:
\[ \mathcal{OC} \circ (\cap \sigma) \circ \mathcal{CO}=\id.\] 
In other words, $\mathcal{CO}=(\cap\sigma)^{-1}\circ \mathcal{OC}^{-1}$ as a linear map. 

Now consider the ``open-closed map" on the $B$-model
\[ \xi_\eta: HH_*(\CB_\eta) \to Jac(\PO;\eta)\] which is explained in \cite{CLS}(for original descriptions see \cite{Se,Shk}). Since it is an isomorphism, let $\psi:=\xi_\eta^{-1}(1) \in HH_*(\CB_\eta)$. Let $\psi=\UL{a_0}\otimes a_1 \otimes \cdots \otimes a_n.$ Pick $r_\eta \in Jac(\PO;\eta)$. Since $\gamma(r_\eta)=\bigoplus_{X\in Ob(\CB_\eta)}r_\eta\cdot \id_X$, we have the following computation of cap products:
\[ \gamma(r_\eta) \cap \psi= m_2(r_\eta \cdot \id,\UL{a_0})\otimes a_1 \otimes \cdots \otimes a_n=r_\eta \cdot \UL{a_0} \otimes a_1 \otimes\cdots\otimes a_n,\]
so the following is straightforward:
\begin{equation}\label{hochmodule} \xi_\eta \circ (\cap \psi) \circ \gamma=\id.\end{equation}
We can summarize the meaning of the above arguments as follows: since $HH_*(\overline{\mfai}(\PO))$ is a module over $HH^*(\overline{\mfai}(\PO))$ which is isomorphic to $Jac(\PO)$ via $\gamma$, $HH_*(\overline{\mfai}(\PO))$ is also equipped with a $Jac(\PO)$-module structure. On the other hand, there is also a canonical $Jac(\PO)$-module structure on $HH_*(\overline{\mfai}(\PO))$ by multiplication. %The module action is well-defined because any partial derivative of $\PO$ defines a nullhomotopic morphism. 
The above condition (\ref{hochmodule}) tells us that these two $Jac(\PO)$-module structures coincide.

%Summarizing, the main feature of the map $\gamma$ is that it is a map which defines a $Jac(W)$-module structure on $HH_*(\overline{\mfai}(W))$ via $HH^*(\calC)$-module structure, which is identical to the canonical $Jac(W)$-module structure on $HH_*$.

\begin{theorem}
Let $X$ be a compact toric Fano manifold and $\PO$ be its LG superpotential. Then the following diagram commutes:
\[\xymatrix{
QH^*(X) \ar[r]^-{\mathcal{CO}} \ar[dd]^{\ks} &HH^*(Fu(X))\ar[rd]^\cong_{[L_\CM^1]} & \\
& & Hom_{\mathcal{A}-\mathcal{B}-bimod}(\CM,\CM) \\
Jac(\PO) \ar[r]^-{\gamma} & HH^*(\OL{MF_{\AI}}(\PO)) \ar[ur]^\cong_{[R_\CM^1]} & }
\]
so $\ks$ becomes a ring isomorphism.
\end{theorem}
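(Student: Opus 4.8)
The plan is to establish commutativity of the outer square by exploiting the Morita bimodule $\CM$ that realizes the localized mirror functor $\locmir^\LL$, and then transporting the already-proven module-compatibilities through it. Concretely, since $\locmir^\LL: \CA \to \CB$ is an $\AI$-equivalence (hence a Morita equivalence), I would take $\CM$ to be the graph bimodule of $\locmir^\LL$, i.e. $\CM(L,E) := \CB(\locmir^\LL(L), E)$, viewed as an $\CA$-$\CB$-bimodule via base change along $\locmir^\LL$ on the left and the diagonal on the right. The maps $[L^1_\CM]$ and $[R^1_\CM]$ are then the isomorphisms of Lemma~\ref{lmrm}, so the two diagonal legs are genuine ring isomorphisms onto $\Hom_{\CA-\CB}(\CM,\CM)$, and the whole statement reduces to checking that $[R^1_\CM]\circ\gamma\circ\ks^{-1} = [L^1_\CM]\circ\CO$ as maps $QH^*(X)\to\Hom_{\CA-\CB}(\CM,\CM)$.

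First I would reduce the closed-state-space maps to their module-level incarnations. The key input is that both $\ks$ and $\gamma$ have been characterized (via $\widetilde{\ks}$ and via the bar-resolution computation in the preceding lemma) together with the two cap-product identities $\mathcal{OC}\circ(\cap\sigma)\circ\CO=\id$ on the $A$-side and $\xi_\eta\circ(\cap\psi)\circ\gamma=\id$ on the $B$-side, and the observation that the two $Jac(\PO)$-module structures on $HH_*(\OL{\mfai}(\PO))$ coincide. I would use these to recast the problem entirely in terms of the $HH^*$-module structure on Hochschild homology: tracing the unit $1\in QH^*(X)$ through $\sigma$ and $\psi$, commutativity becomes the statement that the Morita transport $L_\CM,R_\CM$ intertwines the $\CO$-image and the $\gamma$-image as module maps acting on a distinguished generator. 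Because $L^1_\CM$ and $R^1_\CM$ are ring isomorphisms and $\ks$ is (by the Fukaya--Oh--Ohta--Ono theorem) an isomorphism $QH^*(X)\cong Jac(\PO)$, it suffices to verify the identity on a generating set, which lets me replace the abstract $\CO(\alpha)$ by the explicit Hochschild cocycle built from $\fq_{1,k}(\alpha;-)$.

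The technical heart is the explicit matching on length-zero cochains. Here I would exploit the defining formula $\locmir^\LL_k(x_1,\dots,x_k)(\bullet)=m_{k+1}(x_1,\dots,x_k,\bullet)$: feeding the closed-open cocycle $\CO(\alpha)$ with components $\fq_{1,k}(\alpha;\vec p)$ through $L^1_\CM$ produces, by the bimodule structure map $\mu^{*|1|s}_\CM$, exactly a sum of terms $\fq$ interleaved with $m_{k+1}$, and the $t$-derivative relation (\ref{qinfty}) is precisely the $\AI$-identity guaranteeing that this premorphism is a cocycle and agrees, up to $\delta$-exact terms, with the constant endomorphism $r\cdot\id$ on $\CM$ coming from $R^1_\CM\circ\gamma$, where $r=\ks(\alpha)=[\widetilde{\ks}(\alpha)]\in Jac(\PO)$. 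In other words, the commutativity of the square is a repackaging of equation (\ref{qinfty}) — the differentiated $\AI$-relation for the bulk-deformed operations — as an equality of bimodule premorphisms.

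I expect the main obstacle to be the sign bookkeeping and the passage from the chain-level premorphism identity to the cohomology-level equality. The statement asserts commutativity of genuine ring maps on cohomology, so after producing the two premorphisms $L^1_\CM(\CO(\alpha))$ and $R^1_\CM(\gamma(\ks(\alpha)))$ I must exhibit an explicit $\delta$-primitive (a degree-$(-1)$ bimodule premorphism) witnessing their difference as $\delta$-exact; constructing this homotopy, and checking that all Koszul signs in $\mu_\CM^{r|1|s}$, in the Hochschild differential $b^*$, and in the cap-product sign $\star$ cooperate, is where the real labor lies. A secondary subtlety is confirming that $\CM$ is honestly a Morita bimodule in the sense required by Lemma~\ref{lmrm} for the \emph{direct-sum} target $\OL{\mfai}(\PO)$, i.e. that the equivalence respects the critical-point decomposition $Jac(\PO)=\bigoplus_\eta Jac(\PO;\eta)$ and $D^\pi Fu(X)=\bigoplus_\eta D^\pi Fu_\eta(X)$ compatibly, so that $L^1_\CM$ and $R^1_\CM$ are simultaneously defined across all summands. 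Granting these, the diagram commutes and $\ks=[R^1_\CM]^{-1}\circ\gamma^{-1}\circ[L^1_\CM]\circ\CO$ is manifestly a ring isomorphism.
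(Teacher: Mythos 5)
Your proposal is correct and follows essentially the same route as the paper: the same graph bimodule $\CM=(\locmir^\LL\otimes 1)^*\CB_\Delta$, the diagonal legs from Lemma~\ref{lmrm}, and the same central step of exhibiting the difference between $L^1_\CM(\CO(\alpha))$ and $R^1_\CM(\gamma(\ks(\alpha)))$ as $\delta$-exact, with the differentiated bulk-deformation relation (\ref{qinfty}) together with unitality of $m_0^\LL(1)$ and $\fq_0^\LL(\alpha)$ doing the work; the explicit primitive you defer to ``real labor'' is simply $\xi_\alpha^{r|1|0}(a_1,\cdots,a_r,\UL{m})=m_2^\CB\big(\fq(\alpha;a_1,\cdots,a_r,\bullet),\UL{m}\big)$ with $\xi_\alpha^{r|1|s}=0$ for $s\neq 0$. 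The only divergence is your second paragraph's detour through $\mathcal{OC}$, cap products and a generating-set reduction, which the paper invokes only to justify calling $\gamma$ a closed-open map and which is unneeded in the proof itself, since $\CO(\alpha)$ is by definition the Hochschild cocycle with components $\fq_{1,k}(\alpha;-)$ (note also the slip $\ks^{-1}$ where composability forces $\ks$).
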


\begin{proof}
%If $W$ is the LG superpotential of a compact toric Fano manifold, we know that every critical point of $W$ is Morse, i.e. $Jac(W;\eta)$ has dimension 1 for any $\eta \in Crit(W)$. It follows that any element in $Jac(W;\eta)$ is represented by a scalar $c \in \Lambda$. If we have different scalars $c \neq c'$, then $\phi_c \neq \phi_{c'}$, so $\gamma_\eta:Jac(W;\eta) \to HH^*(\CB_\eta)$ is an injective linear map. Since we already know that they are isomorphic, $\gamma_\eta$ must be an isomorphism because both sides are finite dimensional. Finally, define 
%\[\gamma:=\bigoplus_{\eta \in Crit(W)} \gamma_\eta.\]
Let $\locmir^\LL: Fu(X) \to \OL{\mfai}(\PO)$ be the localized mirror functor whose reference $\LL$ is the monotone torus fiber. Recall our notations $\CA=Fu(X)$ and $\CB=\OL{\mfai}(\PO)$. For a compact toric Fano manifold $X$, $\locmir^\LL$ is actually an $\AI$-quasiequivalence by the result of \cite{CHL2}, and the Morita bimodules are given by
\[ \CM=(\locmir^\LL\otimes 1)^*\CB_\Delta,\;
 \CN=(1 \otimes \locmir^\LL)^*\CB_\Delta\]
so that $\CM \otimes_\CB \CN \simeq \CA_\Delta$ and $\CN \otimes_\CA \CM \simeq \CB_\Delta.$

Let $\CF_\alpha:=(L_\CM^1 \circ \mathcal{CO})(\alpha) \in hom^\bullet_{\CA-\CB}(\CM,\CM)$ for $\alpha \in QH^*(X).$ Let $(a_i : L_i \to L_{i+1})_{i=1,...,r}$ be a tuple of morphisms in $Fu_\lambda(X)$, fix a reference Lagrangian $\LL$ which has a critical value $\lambda$ and let
\[\UL{m}\in \CM(L_{r+1},P)=\CB_\Delta(\locmir^\LL(L_{r+1}),P)=\CB(\locmir^\LL(L_{r+1}),P)\] 
for some $P \in Ob(\CB).$
\begin{align}
& \CF_\alpha^{r|1|0}(a_1,\cdots,a_r,\UL{m}) \nonumber \\
=& L_\CM^1(\CO(\alpha))(a_1,\cdots,a_r,\UL{m}) \nonumber \\
=& \sum (-1)^{|a_1|'+\cdots+|a_i|'}\mu_\CM^{i+r-j+1|1|0}(a_1,\cdots,a_i,\fq(\alpha;a_{i+1},\cdots,a_j),a_{j+1},\cdots,a_r,\UL{m}) \nonumber \\
=& \sum (-1)^{|a_1|'+\cdots+|a_i|'}m_2^{\CB}\Big(\locmir^\LL\big(a_1,\cdots,a_i,\fq(\alpha;a_{i+1},\cdots,a_j),a_{j+1},\cdots,a_r\big),\UL{m}\Big) \label{lm1}
\end{align}
and $\CF_\alpha^{r|1|s}=0$ if $r$ and $s$ are both nonzero, since $\CB$ has $\AI$-operations only up to $m_2$.

Also, for $\CG_\alpha:=(R_\CM^1 \circ \gamma \circ \ks)(\alpha) \in hom^\bullet_{\CA-\CB}(\CM,\CM)$,
\begin{align*}
 \CG_\alpha^{0|1|0}(\UL{m}) = R^1_\CM(\gamma(\ks(\alpha)))(\UL{m})= (-1)^{|\UL{m}|}m_2^{\CB}\big(\UL{m},\ks(\alpha)\cdot \id_P\big)
\end{align*}
and $\CG_\alpha^{r|1|s}=0$ if $r\neq 0$ or $s\neq 0$. The sign $(-1)^{|\UL{m}|}$ is due to the definition of $R^p_\CM$ in Lemma \ref{lmrm}.

We show that \[\CG_\alpha-\CF_\alpha=\delta \xi_\alpha\] for some $\xi_\alpha \in hom^\bullet_{\CA-\CB}(\CM,\CM)$, so $[\CF_\alpha]=[\CG_\alpha]$ in $Hom_{\CA-\CB}(\CM,\CM).$
For any $r\geq 0$, let 
\[\xi_\alpha^{r|1|0}(a_1,\cdots,a_r,\UL{m}):=m_2^{\CB}(\fq(\alpha;a_1,\cdots,a_r,\bullet),\UL{m})\] 
and $\xi_\alpha^{r|1|s}=0$ if $s\neq 0$. Then $|\xi_\alpha|=0$. Here, $\fq(\alpha;a_1,\cdots,a_r,\bullet)$ is a morphism in $\CB$ from $CF(L_1,\LL)$ to $CF(L_{r+1},\LL)$, i.e. the bullet means an input in $CF(L_{r+1},\LL)$.

First let $r\geq 1$. Then $(\CF_\alpha-\CG_\alpha)^{r|1|0}=\CF_\alpha^{r|1|0}$, and continuing from (\ref{lm1}),
\begin{align}
&\CF_\alpha^{r|1|0}(a_1,\cdots, a_r,\UL{m}) \nonumber \\
=& \sum (-1)^{|a_1|'+\cdots+|a_i|'}m_2^\CB(\locmir^\LL(a_1,\cdots,a_i,\fq(\alpha;a_{i+1},\cdots,a_j),a_{j+1},\cdots,a_r),\UL{m}) \nonumber\\
=& \sum (-1)^{|a_1|'+\cdots+|a_i|'} m_2^\CB(m_{i+r-j+1}^\CA(a_1,\cdots,a_i,\fq(\alpha;a_{i+1},\cdots,a_j),\cdots,a_r,\bullet),\UL{m}) \nonumber \\
=& \sum m_2^\CB\Big((-1)^{|\vec{a}_1|'+1}\fq(\alpha;\vec{a}_1,m^\CA(\vec{a}_2,\bullet))
	+(-1)^{|\vec{a}_1'|'+1}\fq(\alpha;\vec{a}_{1}',m^\CA(\vec{a}_{2}'),\vec{a}_{3}',\bullet),\UL{m}\Big) \label{moduleandxi} \\
&+ \sum (-1)^{|\vec{a}_1|'+1}m_2^\CB\Big(m^\CA(\vec{a}_1,\fq(\alpha;\vec{a}_2,\bullet)),\UL{m}\Big)\label{xiandmodule} \\
&\pm \sum m_2^\CB\Big(\fq(\alpha;a_1,\cdots,a_r,\bullet,m_0^\LL(1)),\UL{m}\Big) \label{qunit} \\
&\pm \sum m_2^\CB\Big(m^\CA_{r+2}(a_1,\cdots,a_r,\bullet,\fq_0^\LL(\alpha)),\UL{m}\Big).\label{aiunit}
\end{align}
Recall that the identity at (\ref{moduleandxi}) is given by the formula (\ref{qinfty}). Also observe that 
\[(\ref{moduleandxi})=-(-1)^{|\xi_\alpha|}(\xi_\alpha \circ \hat{\mu}_\CM)(a_1,\cdots,a_r,\UL{m}).\]
On the other hand, the following also holds
\[(\ref{xiandmodule})=-(\mu_\CM \circ \hat{\xi_\alpha})(a_1,\cdots,a_r,\UL{m}),\]
by computations below:
\begin{align*}
 & \sum m_2^\CB\Big((-1)^{|\vec{a}_1|'+1}m^\CA(\vec{a}_1,\fq(\alpha;\vec{a}_2,\bullet)),\UL{m}\Big) \\
 =& \sum m_2^\CB \Big((-1)^{|\vec{a}_1|'+1}m^\CA(\vec{a}_1,\bullet) \circ \fq(\alpha;\vec{a}_2,\bullet),\UL{m}\Big) \\
 =& \sum m_2^\CB \Big( m_2^\CB\big(m^\CA(\vec{a}_1,\bullet),\fq(\alpha;\vec{a}_2,\bullet)\big),\UL{m}\Big) \\
 =& \sum (-1)^{|\vec{a}_1|'}m_2^\CB \Big(m^\CA(\vec{a}_1,\bullet),m_2^\CB\big(\fq(\alpha;\vec{a}_2,\bullet),\UL{m}\big)\Big) \\
 =& \sum (-1)^{|\vec{a}_1|'+|\vec{a}_1|'+1}m_2^\CB(m^\CA(\vec{a}_1,m_2^\CB\big(\fq(\alpha;\vec{a}_2,\bullet),\UL{m}\big)\Big) \\
 =& -(\mu_\CM\circ \hat{\xi_\alpha})(a_1,\cdots,a_r,\UL{m}).
\end{align*}

Furthermore, (\ref{qunit}) and (\ref{aiunit}) vanish since $m_0^\LL(1)$ and $\fq_0^\LL(\alpha)$ are both constant multiples of $\AI$-unit. Hence,
\[ (\CG_\alpha-\CF_\alpha)^{r|1|0}=(\delta \xi_\alpha)^{r|1|0} \;{\rm for}\; r>0.\]

If $\UL{m} \in \CB(\locmir^\LL(L),P)$, then we have
\begin{align}
& (\CF_\alpha-\CG_\alpha)^{0|1|0}(\UL{m}) \nonumber\\
=& m_2^\CB(\LM^\LL(\fq_0^L(\alpha)),\UL{m})-(-1)^{|\UL{m}|}m_2^\CB(\UL{m},\ks^\lambda(\alpha)\cdot \id_P) \nonumber \\
=& m_2^\CB(\LM^\LL(\fq_0^L(\alpha)),\UL{m})-\UL{m}\circ (\ks^\lambda(\alpha)\cdot \id_P )\nonumber \\
=& m_2^\CB\Big(m_2^\CA\big(\fq_0^L(\alpha),\bullet\big)-(-1)^{|\bullet|}m_2^\CA\big(\bullet,\fq_0^\LL(\alpha)\big),\UL{m}\Big) \label{fminusgm}
\end{align}
and by (\ref{qinfty}) again, 
\begin{align}
&(\ref{fminusgm}) \nonumber \\
=& -m_2^\CB\Big(\fq(\alpha;m_1^\CA(\bullet))+m_1^\CA(\fq(\alpha;\bullet)),\UL{m}\Big) \label{qm1m1q} \\ 
&-m_2^\CB\Big(\fq(\alpha;m_0^L(1),\bullet)+(-1)^{|\bullet|'}\fq(\alpha;\bullet,m_0^\LL(1)),\UL{m}\Big)\label{qunit2}
\end{align} 
and since $m_0^L(1)$ and $m_0^\LL(1)$ are both (multiples of) $\AI$-units, (\ref{qunit2}) vanish. On the other hand, 
\begin{align}
& (\delta\xi_\alpha)^{0|1|0}(\UL{m}) \nonumber \\
=& \xi_\alpha(m_1^\CB(\UL{m}))+ m_1^\CB(\xi_\alpha(\UL{m})) \nonumber \\
=& m_2^\CB\Big(\fq(\alpha;\bullet),m_1^\CB(\UL{m})\Big) + m_1^\CB\Big(m_2^\CB(\fq(\alpha;\bullet),\UL{m})\Big) \label{deltaxi}
\end{align}
and we observe that
\[ \fq(\alpha;-m_1^\CA(\bullet))-m_1^\CA(\fq(\alpha;\bullet))=m_1^\CB(\bullet \mapsto \fq(\alpha;\bullet)),\]
%When we wrote $m_1^\CA(\fq(\alpha;\bullet))$, it is a Floer coboundary output in $CF(L,\LL)$, while $m_1^\CB(\fq(\alpha;\bullet))$ means the differential of the morphism $\fq(\alpha;\bullet):CF(L,\LL) \to CF(L,\LL)$ of matrix factorizations. 
but by $\AI$-relation on $\CB$ we have
\[ m_2^\CB\Big(m_1^\CB(\fq(\alpha;\bullet)),\UL{m}\Big)+m_2^\CB\Big(\fq(\alpha;\bullet),m_1^\CB(\UL{m})\Big)+m_1^\CB\Big(m_2^\CB\big(\fq(\alpha;\bullet),\UL{m}\big)\Big)=0,\]
hence
\[ (\ref{qm1m1q})=- (\ref{deltaxi})=-(\delta\xi_\alpha)^{0|1|0}(\UL{m}).\]

Finally, we show $(\delta\xi_\alpha)^{r|1|s}=0$ for $s\neq 0$, so that in this case
\[(\CG_\alpha-\CF_\alpha)^{r|1|s}=(\delta\xi_\alpha)^{r|1|s}\]
where the left hand side is automatically zero by definition of $\CG_\alpha$ and $\CF_\alpha$.

Since $\CB$ has no $\AI$-operations $m_{\geq 3}$, we only need to compute $(\delta\xi_\alpha)^{r|1|1}$.
\begin{align}
& \delta\xi_\alpha(a_1,\cdots,a_r,\UL{m},b) \nonumber\\
=& (-1)^{|a_1|'+\cdots+|a_r|'}\xi_\alpha(a_1,\cdots,a_r,m_2^\CB(\UL{m},b)) \nonumber\\
&+\xi_\alpha(\hat{m}(a_1,\cdots,a_r),\UL{m},b) \label{r11}\\
& +\sum (-1)^{|a_1|'+\cdots+|a_i|'}\xi_\alpha(a_1,\cdots,a_i,\mu_\CM(a_{i+1},\cdots,a_r,\UL{m}),b)\label{r11'}\\ 
&+ m_2^\CB(\xi_\alpha(a_1,\cdots,a_r,\UL{m}),b). \nonumber
\end{align}
By property $\xi_\alpha^{r|1|s}=0$ for $s\neq 0$, (\ref{r11}) and (\ref{r11'}) are zero, and
\[ (-1)^{|a_1|'+\cdots+|a_r|'}
\xi_\alpha(a_1,\cdots,a_r,m_2^\CB(\UL{m},b))
=(-1)^{|a_1|'+\cdots+|a_r|'}m_2^\CB\Big(\fq(\alpha;a_1,\cdots,a_r,\bullet),m_2^\CB(\UL{m},b)\Big),\]
\[ m_2^\CB\Big(\xi_\alpha(a_1,\cdots,a_r,\UL{m}),b\Big)=m_2^\CB\Big(m_2^\CB\big(\fq(\alpha;a_1,\cdots,a_r,\bullet),\UL{m}\big),b\Big).\]
The sum of above two terms is zero due to the $\AI$-relation of $m_2^\CB$, hence 
\[\delta\xi_\alpha(a_1,\cdots,a_r,\UL{m},b)=0.\]

Summarizing all above arguments, we conclude that 
\[ (\CG_\alpha-\CF_\alpha)^{r|1|s}=(\delta\xi_\alpha)^{r|1|s}\]
for any $r$ and $s$, so on the cohomology level,
\[ [\CF_\alpha]=[\CG_\alpha].\]
\end{proof}

\bibliographystyle{amsalpha}

\end{document}